\newtheorem{theorem}{Theorem}[section]
\newtheorem{proposition}[theorem]{Proposition}
\newtheorem{definition}[theorem]{Definition}
\newtheorem{corollary}[theorem]{Corollary}
\newtheorem{remark}[theorem]{Remark}
\def\bp{\begin{proof}}
\def\ep{\end{proof}}
\def\Jc{{\mathcal J}}
\def\Hc{{\mathcal H}}
\def\Lc{{\mathcal L}}
\def\Kc{{\mathcal K}}
\def\Mc{{\mathcal M}}
\def\Nc{{\mathcal N}}
\def\Ac{{\mathcal A}}
\def\Ic{\mathcal{I}}
\begin{document}

\title[[An Alexandrov-Fenchel-type inequality]{An Alexandrov-Fenchel-type inequality in hyperbolic space with an application to  a Penrose inequality}

\author{Levi Lopes de Lima}
\address{Universidade Federal do Cear\'a (UFC),
Departamento de Matem\'{a}tica, Campus do Pici, Av. Humberto Monte, s/n, Bloco 914, 60455-760,
Fortaleza, CE, Brazil.}
\email{levi@mat.ufc.br}
\author{Frederico Gir\~ao}
\address{Universidade Federal do Cear\'a (UFC),
	Departamento de Matem\'{a}tica, Campus do Pici, Av. Humberto Monte, s/n, Bloco 914, 60455-760,
	Fortaleza, CE, Brazil.}
\email{fred@mat.ufc.br}
\thanks{The first author was partially supported by a CNPq/Brazil research grant}

\begin{abstract}
We prove a sharp Alexandrov-Fenchel-type inequality for star-shaped, strictly mean convex hypersurfaces in hyperbolic $n$-space, $n\geq 3$. The argument uses two new monotone quantities along the inverse mean curvature flow. As an application we establish, in any dimension, an optimal Penrose inequality for asymptotically hyperbolic graphs carrying a minimal horizon, with the equality occurring if and  only if the graph is an anti-de Sitter-Schwarzschild solution.
This sharpens previous results by Dahl-Gicquaud-Sakovich and settles, for this class of initial data sets, the conjectured Penrose inequality for time-symmetric space-times with negative cosmological constant. We also explain how our methods  can be easily adapted to derive an optimal Penrose inequality for asymptotically locally hyperbolic graphs in any dimension $n\geq 3$. When the horizon has the topology of a compact surface of genus at least one, this provides an affirmative answer, for this class of initial data sets, to a question posed by Gibbons, Chru\'sciel and Simon on the validity of a Penrose-type inequality for exotic black holes.  
\end{abstract}

\maketitle
\tableofcontents



\section{Introduction}

If $\Sigma\subset \mathbb R^n$ is a convex hypersurface
then the  Alexandrov-Fenchel  inequalities say that
\begin{equation}\label{af}
	\int_\Sigma\sigma_k(\kappa)d\Sigma\geq C_{n,k}\left(\int_{\Sigma}\sigma_{k-1}(\kappa)d\Sigma\right)^{\frac{n-k-1}{n-k}},
\end{equation}
where $\sigma_k(\kappa)$, $1\leq k\leq n-1$, is the $k^{\rm th}$ elementary symmetric function of the principal curvature vector $\kappa=(\kappa_1,\cdots,\kappa_{n-1})$ of $\Sigma$ and $C_{n,k}>0$ is a universal constant. Moreover, the equality holds in (\ref{af}) if and only if $\Sigma$ is a round sphere.
Classically, (\ref{af}) follows from the general theory of mixed volumes, so that convexity is used in an essential way; see \cite{S}. Recently, however, Guan and Li \cite{GL} used a suitable normalization of a certain inverse curvature flow to extend the validity of (\ref{af}), with  the corresponding rigidity statement, for any $\Sigma$ which is star-shaped and $k$-convex (which means that $\sigma_i(\kappa)\geq 0$ for $i=1,\cdots,k$).

An interesting question is to establish versions of these inequalities for appropriate classes of hypersurfaces in more general ambient manifolds, preferably with a corresponding rigidity statement for the case of equality. Here we focus on the case $k=1$
of
(\ref{af}), namely,
\begin{equation}\label{af1}
	c_n\int_\Sigma H d\Sigma\geq \frac{1}{2}\left(\frac{A}{\omega_{n-1}}\right)^{\frac{n-2}{n-1}},
\end{equation}
where $A$ is the area, $H=\sigma_1(\kappa)$ is the mean curvature,
$$
c_n=\frac{1}{2(n-1)\omega_{n-1}},
$$
and $\omega_{n-1}$ is the area of the unit sphere $\mathbb S^{n-1}\subset \mathbb R^n$. We take a first step to\-ward solving this problem by establishing a natural analogue of (\ref{af1}) for star-shaped, strictly mean convex hypersurfaces in hyperbolic $n$-space, $n\geq 3$; see Theorem \ref{main}.
The proof is partly inspired by \cite{GL} and uses two new monotone quantities for the inverse mean curvature flow in hyperbolic space. The precise asymptotics for this flow, which is a key ingredient  in our analysis, has been recently established by Gerhard \cite{G2} \cite{G3}; see \cite{D} for previous work on this subject.
Also, a Heintze-Karcher-type inequality due to Brendle \cite{B}  plays a key role in our proof. We also make use of a special case of a sharp geometric inequality by Brendle-Hung-Wang \cite{BHW}. We  note that Gallego and Solanes \cite{GS} proved related isoperimetric inequalities using integral-geometric methods, but their results do not seem to be sharp.

The inequality (\ref{af1}) has recently become relevant in the context of the Penrose inequality for asymptotically flat graphs carrying a minimal horizon \cite{L} \cite{dLG1}  and for asymptotically hyperbolic graphs carrying a constant mean curvature horizon \cite{dLG2}.  As an application of Theorem \ref{main} we establish an optimal Penrose inequality for asymptotically hyperbolic graphs carrying a minimal horizon, including
the rigidity statement according to which the equality holds only if $(M,g)$ is the graph realization of an anti-de Sitter-Schwarzs\-child solution; see Theorem \ref{main2}.
This Penrose inequality  improves  recent results by Dahl-Gicquaud-Sakovich \cite{DGS} and settles, for this class of initial data sets, the conjectured Penrose inequality for time-symmetric space-times with negative cosmological constant \cite{BC} \cite{Ma}.
We remark that the proof of the rigidity statement follows from the arguments in a  recent paper by Huang and Wu \cite{HW1}, as adapted
to the asymptotically hyperbolic case in  \cite[Section 5]{DGS}; see also \cite{dLG3}.

To explain our results, let us consider the hyperbolic $n$-space $\mathbb H^n$ with coordinates $(r,\theta)\in \mathbb R^+\times\mathbb S^{n-1}$ and endowed with the metric
\begin{equation}\label{modelr}
	g_1=dr^2+\sinh^2r\, h,
\end{equation}
where $r$ is the geodesic distance to a chosen origin corresponding to $r=0$ and $h$ is the round metric on $\mathbb S^{n-1}$. We say that a closed, embedded hypersurface $\Sigma\subset \mathbb H^n$
is {\em star-shaped} if it can be written as a radial graph over a geodesic sphere centered at the origin. Also, it is {\em strictly mean convex} if its mean curvature $H$ is positive everywhere.
We also consider $\rho_1:\mathbb H^n\to \mathbb R$,
\begin{equation}\label{rho}
	\rho_1(r)=\cosh r.
\end{equation}
With this notation at hand we can state the hyperbolic Alexandrov-Fenchel-type  inequality.

\begin{theorem}\label{main}
	If $\Sigma\subset \mathbb H^n$ is a star-shaped and strictly mean convex  hypersurface  then
	\begin{equation}\label{afh}
		c_n\int_\Sigma \rho_1 H d\Sigma\geq \frac{1}{2}\left(\left(\frac{A}{\omega_{n-1}}\right)^{\frac{n-2}{n-1}}+
		\left(\frac{A}{\omega_{n-1}}\right)^{\frac{n}{n-1}}\right),
	\end{equation}
	where $A$ is the area of $\Sigma$. Moreover, the equality holds if and only if $\Sigma$ is a geodesic sphere centered at the origin.
\end{theorem}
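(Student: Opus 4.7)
The plan is to flow $\Sigma$ by the inverse mean curvature flow $\partial_t x=H^{-1}\nu$ and to extract \eqref{afh} by means of two monotone quantities whose limiting values as $t\to\infty$ are furnished by Gerhardt's smooth asymptotics. Writing $A(t)=|\Sigma_t|$, $W(t)=\int_{\Sigma_t}\rho H\,dA$ and $V_\rho(t)=\int_{\Omega_t}\rho\,dV$, the warped-product identity $\overline\nabla^{\,2}\rho=\rho\,g_0$ in $\mathbb H^n$ combined with two integrations by parts yields the evolution equations
\[
\dot A=A,\qquad \dot V_\rho=\int_{\Sigma_t}\frac{\rho}{H}\,dA,\qquad
\dot W=2nV_\rho+2\int_{\Sigma_t}\frac{\rho\,\sigma_2}{H}\,dA.
\]
Two sharp inequalities drive the rest of the argument: the Newton--Maclaurin inequality $\sigma_2\le\tfrac{n-2}{2(n-1)}H^2$, and Brendle's warped-product Heintze--Karcher inequality $\int_\Sigma\rho/H\,dA\ge\tfrac{n}{n-1}V_\rho$; each becomes an equality precisely on a geodesic sphere centred at the origin.

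\emph{First monotone quantity.} Setting $Y:=W-n(n-1)V_\rho$ and combining the two displayed inequalities with the evolution equations gives $\dot Y\le\tfrac{n-2}{n-1}Y$, so $\mathcal Y(t):=Y(t)\,A(t)^{-(n-2)/(n-1)}$ is non-increasing along the flow. A direct computation shows $\mathcal Y=(n-1)\omega_{n-1}^{1/(n-1)}$ on centred spheres, while on any other umbilic (geodesic sphere not centred at the origin) the value is strictly larger. Coupling monotonicity with Gerhardt's $C^\infty$-convergence of $\rho(\cdot,t)-t/(n-1)$ to a limiting profile on $S^{n-1}$ yields $\mathcal Y(\infty)\ge (n-1)\omega_{n-1}^{1/(n-1)}$, so at every $t\ge 0$ one has
\begin{equation}\label{auxineq}
W\ge n(n-1)V_\rho+(n-1)\omega_{n-1}^{1/(n-1)}A^{(n-2)/(n-1)}.
\end{equation}

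\emph{Second monotone quantity.} Set $\mathcal Q(t):=[W-(n-1)\omega_{n-1}^{1/(n-1)}A^{(n-2)/(n-1)}]\,A^{-n/(n-1)}$. Differentiating and using only Newton--Maclaurin gives
\[
\dot{\mathcal Q}\le A^{-n/(n-1)}\Bigl[2nV_\rho-\tfrac{2}{n-1}W+2\omega_{n-1}^{1/(n-1)}A^{(n-2)/(n-1)}\Bigr],
\]
and \eqref{auxineq}, applied at time $t$, makes the bracket non-positive; so $\mathcal Q$ is also non-increasing. On centred spheres $\mathcal Q=(n-1)\omega_{n-1}^{-1/(n-1)}$, and the same Gerhardt-based limit argument produces $\mathcal Q(\infty)\ge(n-1)\omega_{n-1}^{-1/(n-1)}$. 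Monotonicity then gives $\mathcal Q(0)\ge(n-1)\omega_{n-1}^{-1/(n-1)}$, which after multiplication by $c_n=(2(n-1)\omega_{n-1})^{-1}$ is precisely \eqref{afh}. For the rigidity, equality in \eqref{afh} forces $\mathcal Q\equiv{}$const along the flow, which in turn forces equality in both Newton--Maclaurin and in Brendle's Heintze--Karcher at every time; the former makes each $\Sigma_t$ totally umbilic, and the latter, applied to an umbilic hypersurface, pins the centre at the origin, so $\Sigma$ is a geodesic sphere centred at~$0$.

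The anticipated main obstacle is the asymptotic step. The leading $e^{nt/(n-1)}$ contributions to $W$ and $n(n-1)V_\rho$ cancel identically, so the limits $\mathcal Y(\infty),\mathcal Q(\infty)$ are controlled by sub-leading corrections coming from $\cosh u-\sinh u=e^{-u}$ and from the horospherical deviation $H-(n-1)=O(e^{-2u})$. Extracting the required lower bounds on these limits from Gerhardt's smooth convergence of the rescaled graph function demands a careful Jensen/convexity-type step on $S^{n-1}$ together with the identification of centred spheres as the equality case of that step, which is also what seals the rigidity claim.
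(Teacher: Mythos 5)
Your proposal is fundamentally the same IMCF-plus-monotone-quantities strategy, but reorganized in a way that is in some respects cleaner than the paper's, and in others it leaves the hardest step unjustified.

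\textbf{What is genuinely different.} Your auxiliary inequality $W\ge n(n-1)V_\rho+(n-1)\omega_{n-1}^{1/(n-1)}A^{(n-2)/(n-1)}$ is exactly the Brendle--Hung--Wang Minkowski inequality $\Mc(\Sigma)\ge(n-1)\omega_{n-1}$ (recall $\Jc=-\int_\Sigma p\,d\Sigma=nV_\rho$, $\Ic=W$, $\Ac=A/\omega_{n-1}$). The paper simply \emph{cites} \cite{BHW} for it and then proves monotonicity of a quantity $\Lc=(\Ic-(n-1)\Kc)/\Ac^{(n-2)/(n-1)}$ that is \emph{conditionally} monotone (only on intervals where $\Jc\le\Kc$), which forces the paper into a three-way case analysis plus the Appendix's Beckner-type estimate on $\liminf\Lc$. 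Your $\mathcal{Q}=[W-(n-1)\omega_{n-1}^{1/(n-1)}A^{(n-2)/(n-1)}]/A^{n/(n-1)}$ uses the opposite normalization: once the auxiliary inequality is in hand, your computation shows $\dot{\mathcal{Q}}\le0$ \emph{unconditionally}, so no case split is needed; moreover the limit $\mathcal{Q}(\infty)=\lim W/A^{n/(n-1)}\ge(n-1)\omega_{n-1}^{-1/(n-1)}$ follows from nothing more than H\"older applied to $\int e^{(n-1)f}$ and $\int e^{nf}$ at leading order, far lighter than the paper's appendix. Had you simply \emph{cited} \cite{BHW} for the auxiliary inequality, this would be a complete and arguably tidier proof of the inequality.

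\textbf{Where the gap is.} Instead of citing \cite{BHW}, you propose to derive the auxiliary inequality yourself via the monotonicity of $\mathcal{Y}=[W-n(n-1)V_\rho]/A^{(n-2)/(n-1)}$. The monotonicity computation $\dot Y\le\tfrac{n-2}{n-1}Y$ is correct (Newton--Maclaurin plus Brendle give exactly this), but the required limit $\mathcal{Y}(\infty)\ge(n-1)\omega_{n-1}^{1/(n-1)}$ is precisely where the leading $e^{nt/(n-1)}$ terms of $W$ and $n(n-1)V_\rho$ cancel, so that the conclusion lives at the sub-leading $e^{(n-2)t/(n-1)}$ level and needs the full Gerhardt expansion together with a Beckner/Sobolev-type inequality on $\mathbb S^{n-1}$ — i.e.\ essentially the content of \cite{BHW}. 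You name this the ``anticipated main obstacle'' but do not close it, so as written there is a genuine gap exactly where the paper defers to a reference.

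\textbf{A correctness issue in the rigidity chain.} You assert that Brendle's Heintze--Karcher inequality ``becomes an equality precisely on a geodesic sphere centred at the origin'' and later that equality in Brendle ``pins the centre at the origin.'' This is not correct: the equality case of Brendle's inequality (Proposition \ref{brendle}) is total umbilicity, i.e.\ \emph{any} geodesic sphere, centred or not, gives equality. What actually pins the centre is the equality case of the auxiliary (BHW) inequality once umbilicity is established — equivalently, the direct computation that on a non-centred sphere the quantity is strictly larger, as in Proposition \ref{geode}. Moreover, tracking your own $\dot{\mathcal{Q}}\le0$ estimate shows that $\dot{\mathcal{Q}}\equiv0$ forces equality in Newton--Maclaurin and in the auxiliary inequality, not directly in Brendle's inequality; the invocation of Brendle in the rigidity step should be replaced by the auxiliary-inequality rigidity.
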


We now explain the relevance of this result for a certain Penrose inequality.
Recall that a Riemannian manifold $(M^n,g)$ is said to be {\em asymptotically hyperbolic} (AH) if there exists a compact subset $K\subset M$ and a diffeomorphism $\Psi:M-K\to \mathbb H^n-K_0$, where $K_0\subset \mathbb H^n$ is compact, such that
\begin{equation}\label{asympmani}
	\|\Psi_*g-g_1\|_{g_1}=O(e^{-\sigma r}),\quad \|\nabla_{g_1}\Psi_*{g}\|_{g_1}=O(e^{-\sigma  r}),
\end{equation}
as $r\to +\infty$, for some $\sigma>n/2$. We also assume that the difference between scalar curvatures, namely,
$$
\mathfrak R_g=R_{g}+n(n-1),
$$
is such that $\rho_1\mathfrak R_g$ is integrable. For any chart at infinity as in (\ref{asympmani}) it is possible to associate  a mass-like invariant 
$\mathfrak m_{\Psi}$ which lies in $\mathbb L^{n+1}$, the Lorentzian space endowed with the metric
\begin{equation}\label{lorinner}
	(z,w)=z_0w_0-z_1w_1-\cdots -z_nw_n;
\end{equation}
see Section \ref{masstype} for more details on this construction.
It turns out that the causal character of $\mathfrak m_{\Psi}$ is invariant under coordinate changes at infinity.  Moreover, 
the numerical invariant $\mathfrak m_{(M,g)}$ defined by
\begin{equation}\label{mass-likeinv}
	\mathfrak m_{(M,g)}^2=|(\mathfrak m_{\Psi},\mathfrak m_{\Psi})|
\end{equation}
does {\em not} depend on the chart $\Psi$ and is termed the {\em mass} of $(M,g)$. It is natural to choose $\mathfrak m_{(M,g)}>0$ if $\mathfrak m_{\Psi}$ is time-like and future directed.

The Positive Mass Conjecture in this context asserts that if $\mathfrak R_g\geq 0$ then $\mathfrak m_{\Psi}$ is time-like and future-directed or vanishes, the latter occurring only if $(M,g)$ is isometric to $(\mathbb H^n,g_1)$. Equivalently, $\mathfrak m_{(M,g)}\geq 0$ with equality holding only for hyperbolic space. This has been proved for the spin case by Chru\'sciel and Herzlich \cite{CH}, generalizing a previous contribution by Wang \cite{W}; see also \cite{ACG} for a similar result in low dimensions with the spin condition removed. Moreover, if $M$ carries a (possibly disconnected) compact, outermost minimal boundary $\Gamma$ (a {\em horizon}) of area $A$, then the corresponding Penrose Conjecture says that
\begin{equation}\label{pineq}
	\mathfrak m_{(M,g)}\geq\frac{1}{2}\left(\left(\frac{A}{\omega_{n-1}}\right)^{\frac{n-2}{n-1}}+
	\left(\frac{A}{\omega_{n-1}}\right)^{\frac{n}{n-1}}\right),
\end{equation}
with equality holding only if $(M,g)$ is the (exterior) anti-de Sitter-Schwarz\-schild solution. We refer  to Section \ref{masstype} and the surveys \cite{BC} and \cite{Ma} for background on this conjecture.

Progress in establishing (\ref{pineq})   has been restricted so far to the case of graphs, as we now pass to explain. Recall that the metric
\begin{equation}\label{model}
	\overline g_1=\rho_1^2d\tau^2+g_1, \quad \tau\in \mathbb R,
\end{equation}
realizes $\mathbb H^n\times \mathbb R$ as the hyperbolic $(n+1)$-space $\mathbb H^{n+1}$. Using this model we then say that a complete immersed hypersurface $M\subset \mathbb H^{n+1}$ is {\em asymptotically hyperbolic} (AH) if there exists a compact subset $K\subset M$ such that $M-K$ can be written as a vertical graph associated to a smooth function $u:\mathbb H^n-K_0\to \mathbb R$, where $K_0\subset \mathbb H^n$ is compact, so that (\ref{asympmani}) holds for the chart $\Psi$ given by $\Psi(x,u(x))=x$, $x\in M-K_0$; see Definition \ref{alshyp} below. As explained in \cite{dLG2}, if additionally $M$ carries a minimal horizon $\Gamma$ then we may
assume that $\mathfrak m_{\Psi}$ is time-like and future oriented so that after composing $\Psi$ with an isometry we have
\begin{equation}\label{bal}
	\mathfrak m_{(M,g)}=\mathfrak m_{\Psi}(\rho_1),
\end{equation}
where here we use that $\rho_1=z_0|_{\mathbb H^n}$ if we view $\mathbb H^n\subset \mathbb L^{n+1}$ as the standard hyperboloid.
Charts with this property are called {\em balanced}.
Now let $M$ be {\em balanced} in the sense that nonparametric  coordinates at infinity are balanced as above.
Moreover, assume that $\Gamma$ lies on a totally geodesic hypersurface $P\subset \mathbb H^{n+1}$ defined by $\tau=\tau_0$, $\tau_0\in\mathbb R$, and that $M$ meets $P$ orthogonally along $\Gamma$, so that $\Gamma$ is minimal (hence, a horizon indeed). Under these conditions and starting from (\ref{bal}) it is shown in \cite{dLG2} that
\begin{equation}\label{massformulae}
	\mathfrak m_{(M,g)}=c_n\int_M\Theta\mathfrak R_gdM+c_n\int_{\Gamma}\rho_1 H d\Gamma,
\end{equation}
where $\Theta=\langle N,\partial/\partial t\rangle$, with $N$ being the unit normal to $M$ pointing upward at infinity  and $H$ being the mean curvature of $\Gamma\subset P$ with respect to its inward pointing unit normal; compare to the more general formula in (\ref{massform}). 

We remark that if $M$ is a  graph then (\ref{massformulae}) has been previously proved in \cite{DGS}. If this is the case, so that $\Theta >0$, and if we assume further that $\mathfrak R_g\geq 0$ then we obtain
from (\ref{massformulae}) that
\begin{equation}\label{estima}
	\mathfrak m_{(M,g)}\geq c_n\int_{\Gamma}\rho H d\Gamma.
\end{equation}
In \cite{DGS} this estimate is used to obtain several sub-optimal versions of (\ref{pineq}).
For instance, assuming that $\Gamma\subset P=\mathbb H^n$ is $h$-convex (in the sense that all principal curvatures are at least $1$) and encloses the origin of $P$, the authors show that
$$
\mathfrak m_{(M,g)}\geq \frac{1}{2}\left(\left(\frac{A}{\omega_{n-1}}\right)^{\frac{n-2}{n-1}}+\sinh r_{\rm in}\frac{A}{\omega_{n-1}}\right),
$$
where $r_{\rm in}$ is the radius of the largest geodesic ball centered at the origin and contained in the region enclosed by $\Gamma$. Notice that this only yields the conjectured inequality (\ref{pineq}) if $\Gamma$ is a geodesic sphere centered at the origin.
In view of Theorem \ref{main},  however, if we take $\Sigma=\Gamma$ we  immediately obtain the first statement in the following theorem; a more general result, covering the locally hyperbolic case, can be found in Theorem \ref{theomain} below. Recall that a hypersurface is said to be {\em mean convex} if its mean curvature is non-negative everywhere.

\begin{theorem}\label{main2}
	Let $(M,g)\subset \mathbb H^{n+1}$ be a  balanced AH graph carrying a minimal horizon $\Gamma$ as above. If we assume further that $\Gamma\subset P=\mathbb H^n$ is star-shaped (with respect to the origin) and mean convex then
	(\ref{pineq}) holds if $R_g\geq -n(n-1)$. Moreover, the equality occurs if and only if $(M,g)$ is the graph realization of an (exterior) anti-de Sitter-Schwarzschild solution (which is obtained by  taking $\epsilon=1$ in (\ref{embed}) below).
\end{theorem}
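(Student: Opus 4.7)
The plan has two parts. The inequality will follow by combining the mass formula (\ref{massformulae}) with Theorem \ref{main}; the rigidity will require tracing through the equality cases and then establishing rotational symmetry of the graph in the spirit of Huang-Wu \cite{HW1}.

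For the inequality, since $R_g \geq -n(n-1)$ is equivalent to $\mathfrak{R}_g \geq 0$, and since $\Theta > 0$ on the vertical graph $M$, the mass formula (\ref{massformulae}) immediately yields (\ref{estima}). The horizon $\Gamma \subset P = \mathbb{H}^n$ is assumed only mean convex, while Theorem \ref{main} requires strict mean convexity; I would bridge this gap by approximating $\Gamma$ with a family $\Gamma_{\varepsilon}$ of star-shaped, strictly mean convex radial graphs (e.g.\ via short-time mean curvature flow, or a simple outward radial perturbation) and passing to the limit $\varepsilon \to 0$ in (\ref{afh}), using continuity of the area and of $\int \rho H \, d\Gamma$ under $C^2$ convergence. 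Combining with (\ref{estima}) then gives (\ref{pineq}).

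For the rigidity, assume equality in (\ref{pineq}). Then equality must hold in both (\ref{estima}) and in (\ref{afh}) applied to $\Gamma$. The first forces $\int_M \Theta \mathfrak{R}_g \, dM = 0$, hence $R_g \equiv -n(n-1)$ on $M$, since $\Theta > 0$ and $\mathfrak{R}_g \geq 0$ pointwise. The second, via the rigidity clause of Theorem \ref{main}, forces $\Gamma$ to be a geodesic sphere centered at the origin of $P$. It remains to identify $M$ with the graph realization (\ref{relsch}) of an AdS-Schwarzschild solution. Following \cite{HW1}, I would prove rotational symmetry of the graphing function $u$ defined on $\mathbb{H}^n$ minus the geodesic ball bounded by $\Gamma$: the prescribed scalar curvature PDE for graphs in the warped product (\ref{model}) is rotationally invariant, as are both the Dirichlet boundary data ($u \equiv t_0$ with $|\nabla u| \to \infty$ along $\Gamma$, reflecting orthogonality to $P$) and the asymptotic decay (\ref{asympmani}). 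A moving-plane / Alexandrov reflection argument in the radial direction should then yield $u = u(r)$, after which the constant scalar curvature equation reduces to an ODE whose unique solution matching the prescribed boundary and asymptotic behavior is the AdS-Schwarzschild profile.

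The main obstacle is precisely this final symmetrization. The moving-plane method must be adapted to the hyperbolic ambient and must cope simultaneously with the singular boundary condition on $\Gamma$ (where $|\nabla u|$ blows up) and with the decay at infinity; ensuring that both ends supply adequate comparison surfaces to initialize and close the procedure is the delicate point. An alternative, essentially the approach of \cite{HW1}, is to derive the symmetry from the equality case of a sharp geometric inequality that forces umbilicity on the level sets of $u$ and then invoke a hyperbolic Obata-type theorem; either route relies essentially on the balanced-chart normalization (\ref{bal}).
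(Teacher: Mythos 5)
Your treatment of the inequality is correct and matches the paper: combine (\ref{massformulae}) with $\Theta>0$ and $\mathfrak{R}_g\geq 0$ to get (\ref{estima}), then apply Theorem \ref{main} to the horizon $\Gamma$, passing from mean convex to strictly mean convex by a $C^2$ approximation, which is exactly what the paper does. Your reduction of the equality case to the two pointwise conclusions (namely $R_g\equiv -n(n-1)$ from $\Theta>0$, and $\Gamma$ a centered geodesic sphere from the rigidity in Theorem \ref{main}) is also fine.

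For the remaining identification of $M$ with the adSS graph, however, your proposed route is not the one in the paper, and as written it has a real gap. You suggest a moving-plane/Alexandrov reflection argument to prove radial symmetry of the graphing function $u$, and you yourself flag the central difficulty: the boundary condition on $\Gamma$ is singular ($|\nabla u|\to\infty$, since the graph meets $P$ orthogonally) and the decay at infinity must also be controlled, so initializing and propagating the reflection is genuinely problematic in this noncompact, doubly-delicate setting. You leave that step unresolved, so the rigidity is not established by your argument. Your "alternative" sketch (umbilicity of level sets plus a hyperbolic Obata theorem) also does not reflect what Huang--Wu actually do.

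The paper's route avoids symmetrization entirely. By Gauss, $R_g\equiv -n(n-1)$ is equivalent to the extrinsic scalar curvature equation $K^M=\sigma_2(\kappa^M)=0$ on the hypersurface $M\subset\mathbb{H}^{n+1}$, a fully nonlinear second-order PDE for the graphing function. The point is to show this equation is \emph{elliptic} along the solution, i.e.\ that the Newton tensor $G^M=H^M I-a^M$ is (semi-)definite. The algebraic inequality (\ref{mainob}), $H^M\lambda_i\geq K^M+\frac{n}{2(n-1)}\lambda_i^2$, shows (Proposition \ref{weak}) that $H^M\geq 0$ already forces $G^M\geq 0$ when $K^M=0$. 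The nontrivial step is then to rule out sign changes of $H^M$: since $K^M=0$ forces the shape operator to vanish wherever $H^M=0$, the set where $H^M=0$ consists of geodesic points, and Sacksteder's structure theorem (valid here using the Gauss map in the hyperboloid model) constrains its components to lie on totally geodesic hyperplanes tangent to $M$. An analysis of the bounded and unbounded cases, together with compactness of the level sets $M_t$ near infinity (which uses the uniform boundedness of the adSS profile (\ref{relsch}) at infinity, so the extra dimension-dependent hypotheses of \cite{HW1} are unnecessary here), yields a contradiction if $H^M$ changes sign. Once ellipticity is in hand, a standard maximum principle comparison with the model solution (\ref{relsch}) gives uniqueness. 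This elliptic-PDE route sidesteps the symmetrization difficulties you identified and is the one you should pursue; alternatively, as noted in Remark \ref{mlad}, one can double the graph across $P$ and appeal to a global rigidity theorem for two-ended scalar-flat hypersurfaces in the spirit of \cite{HL}.
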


As remarked above, the rigidity statement requires a separate argument and is based on results in a recent preprint by Huang and Wu \cite{HW1}.

This paper is organized as follows. In Section \ref{masstype}
we recall the definitions and main properties of mass-type invariants for asymptotically locally hyperbolic manifolds. 
The proofs of Theorems \ref{main} and \ref{main2}, which 
use the material on the inverse mean curvature flow discussed in Section \ref{imcfsec} and in Appendix \ref{refined},
are presented in Section \ref{proofmain}. 
We observe that the Penrose inequality (\ref{pineq}) admits a natural  generalization to the case in which the geometry at infinity is asymptotically {\em locally} hyperbolic. In Appendix  \ref{alhcaseap} we indicate how the arguments leading to the proofs of Theorems \ref{main} and \ref{main2} can be adapted to prove a sharp Penrose-type inequality for asymptotically locally hyperbolic graphs, which is described in Theorem \ref{theomain} below.

\vspace{0.2cm}
\noindent
{\bf Acknowledgements.} The work leading to this paper started when the authors were visiting IMPA on the occasion of its 2012 Summer Program and they would like to thank Prof.  F. Marques for providing a wonderful scientific environment. Also, the authors would like to thank Y. Ge,  P. K. Hung, A. Neves, G. Wang,  M. T. Wang and Y. Wei for discussions and  C. Xiong for  pointing out an error in a previous version of this work. Finally, we thank the anonymous referees for suggestions that contributed to substantially improve the presentation. 	

\section{Mass-type invariants and Penrose inequalities for asymptotically locally hyperbolic manifolds}
\label{masstype}

In this section we review the main properties of mass-type invariants for asymptotically hyperbolic manifolds;
see  \cite{CH}, \cite{CN}, \cite{He} and \cite{M} for more details. The class of invariants presented here, which appear in natural generalizations of the classical Penrose inequality, is just a special case of a much more general construction due to Chru\'sciel, Herzlich and Nagy \cite{CH} \cite{CN} \cite{He}. Their definition applies in particular to the class of asymptotically locally hyperbolic manifolds we consider here.

We start by describing the corresponding {\em locally hyperbolic} (LH) reference metrics. Fix $\epsilon =0,\pm 1$ and let $(N^{n-1},h)$ be a closed space form of sectional curvature $\epsilon$. In the product manifold $P_\epsilon=I_\epsilon\times N$, where $I_{-1}=(1,+\infty)$ and $I_0=I_1=(0,+\infty)$, define the metric
\begin{equation}\label{half}
	g_\epsilon=\frac{d\tilde r^2}{\rho_{\epsilon}(\tilde r)^2}+\tilde r^2h, \quad \tilde r\in I_\epsilon,
\end{equation}
where
\begin{equation}\label{half2}
	\rho_\epsilon (\tilde r)=\sqrt{\tilde r^2+\epsilon}.
\end{equation}
It is easy to check that $(P_\epsilon,g_\epsilon)$ is {locally hyperbolic} in the sense that its sectional curvature is constant and equal to $-1$.
We also note that the manifold
$Q_\epsilon=\mathbb R\times P_\epsilon$, endowed with the metric
\begin{equation}\label{aswe}
	\overline g_\epsilon=\rho_\epsilon^2d\tau^2+g_\epsilon,
\end{equation}
is locally hyperbolic as well.
Moreover, if $\tau_0\in\mathbb R$ then the horizontal slice $P_{\epsilon}^{\tau_0}\subset Q_\epsilon$ given by $\tau=\tau_0$ is totally geodesic. This follows from the fact that the vertical vector field $\partial_\tau$ is Killing. In particular, each $P_\epsilon^{\tau_0}$ can be naturally identified to $P_\epsilon$.
Notice that if we take $\epsilon=1$ and $(N,h)$ to be a round sphere then we recover hyperbolic space as in the Introduction; compare (\ref{half})-(\ref{half2}) with (\ref{modelr})-(\ref{rho}) after setting $\tilde r=\sinh r$.

Let $(M^n,g)$ be a complete $n$-dimensional Riemannian manifold, possibly carrying a compact inner boundary $\Gamma$. For simplicity we assume that $M$ has a unique end, say $E$. We say that $(M,g)$ is {\em asymptotically locally hyperbolic} (ALH) if there exists a chart $\Psi$ taking $E$ to the end of $P_\epsilon$ corresponding to $\tilde r=+\infty$
so that, as $\tilde r\to +\infty$,
\begin{equation}\label{alhcond}
	\|\Psi_*g-g_{\epsilon}\|_{g_\epsilon}+ \|d\Psi_*g\|_{g_\epsilon}=O\left(\tilde r^{-\sigma}\right),
\end{equation}
for some $\sigma>n/2$. We also assume that $\mathfrak R_g=R_{g}+n(n-1)$ is such that $\rho_\epsilon \mathfrak R_g$ is  integrable. If $\epsilon=1$ and $N=\mathbb S^{n-1}$ we say that $(M,g)$ is {\em asymptotically hyperbolic} (AH).

It turns out that each model space $P_\epsilon$ is {\em static} in the sense that the space
\begin{equation}
	\label{staticeq}
	\Nc_{g_\epsilon}=\{f\in C^{\infty}(P_\epsilon);(\Delta_{g_\epsilon}f)g_\epsilon-\nabla^2_{g_\epsilon}f+f{\rm Ric}_{g_\epsilon}=0\}
\end{equation}
is non-trivial, as we can easily check that $\rho_\epsilon\in\Nc_{g_\epsilon}$. 
If $\Psi$ is a chart at infinity as above, we define the corresponding mass functional $\mathfrak m_{\Psi}:\Nc_{g_\epsilon}\to \mathbb R$ by
\begin{equation}\label{mass}
	\mathfrak m_{\Psi}(\varphi)=\lim_{r\to +\infty}c_n\int_{N_r}\left(\varphi\left({\rm div}_{g_\epsilon}e-d{\rm tr}_{g_\epsilon}e\right)-
	i_{\nabla_{g_\epsilon}\varphi}e+({\rm tr}_{g_\epsilon}e)d\varphi\right)(\nu_{r})dN_{r},
\end{equation}
where $e=\Psi_*g-g_\epsilon$, $\nu_r$ is the unit normal to $N_r=\{r\}\times N$ and
\[
c_n=\frac{1}{2(n-1)\vartheta_{n-1}}, \quad \vartheta_{n-1}={\rm area}(N,h).
\]
If $\Phi$ is another chart at infinity one verifies that
\begin{equation}\label{appear}
	\mathfrak m_{\Phi}(\varphi)=\mathfrak m_{\Psi}(\varphi\circ \Ic^{-1}),
\end{equation}
where $\Ic\in {\rm Isom}(P_\epsilon)$ satisfies
$$
\|\Phi\circ\Psi^{-1}-\Ic\|_{g_\epsilon}=O(\tilde r^{-\sigma}).
$$
Thus, in order to get a numerical invariant out of this scheme we need a detailed description of the structure of the action of ${\rm Isom}(P_\epsilon)$ on $\Nc_{g_\epsilon}$ appearing on  the right-hand side of (\ref{appear}). 

We first consider the case $\epsilon=1$ and $N_1=\mathbb S^{n-1}$, so that $P_1=\mathbb H^n$. Here, $\Nc_{g_1}$ is generated by $\{z_i|_{\mathbb H^n}\}_{i=0}^n$, where we
view $\mathbb H^n\subset \mathbb L^{n+1}$, the Lorentz space endowed with the metric (\ref{lorinner}). 
Since the action of ${\rm Isom}(\mathbb H^n)$ on $\Nc_{g_1}=\mathbb L^{n+1}$  preserves 
(\ref{lorinner}), 
with  $\rho_1=z_0$ being time-like and future oriented, it follows that the real number $\mathfrak m_{(M,g)}$ defined up to a sign by
(\ref{mass-likeinv})
does {\em not} depend on the chart $\Psi$ and is termed the {\em mass} of $(M,g)$. We note  that the causal character of $\mathfrak m_{\Psi}$ is also invariant under coordinate changes at infinity, so it is natural to choose $\mathfrak m_{(M,g)}>0$ if $\mathfrak m_{\Psi}$ is time-like and future directed. As already observed in the Introduction, if $M$ carries a minimal horizon $\Gamma$ then we may assume, after composition with an isometry, that (\ref{bal}) holds. 

In contrast to the hyperbolic case described in the previous paragraph, it is known that for $\epsilon\leq 0$ the space $\Nc_{g_\epsilon}$ is one-dimensional, generated by $\rho_\epsilon$. Thus, in all the cases considered here  a mass-type numerical invariant 
$\mathfrak m_{(M,g)}$
is obtained by evaluating the right-hand side of (\ref{mass}) on $\varphi=\rho_\epsilon$.

An ALH manifold $(M,g)$ as above can be thought of as the initial data set of a time-symmetric solution of the Einstein field equations with negative cosmological constant. The invariant $\mathfrak m_{(M,g)}$ is then interpreted as the total mass of the solution. Physical reasoning predicts  that $\mathfrak m_{(M,g)}$ should have the appropriate sign under the relevant dominant energy condition, namely, 
$\mathfrak R_g\geq 0$ 
(equivalently, $R_g\geq -n(n-1)$). When $M$ carries a compact minimal horizon $\Gamma$ one expects the invariant to satisfy a Penrose-type inequality in the sense that  it should be bounded from below by a suitable expression involving the area $|\Gamma|$ of $\Gamma$. In order to figure out the correct form of this inequality, we consider the so-called {\em Kottler black hole metrics}, which are deformations of the LH metrics $g_\epsilon$ above.

Let us introduce a real parameter $m>0$
and consider the metric
\begin{equation}\label{bhmet}
	g_{\epsilon,m}=\frac{d\tilde r^2}{\rho_{\epsilon,m}(\tilde r)^2}+\tilde r^2h,
\end{equation}
where
\[
\rho_{\epsilon,m}(\tilde r)=\sqrt{\tilde r^2+\epsilon-\frac{2m}{\tilde r^{n-2}}}.
\]
For each $m$ as above, it is easy to see that  the function
\[
\tilde r\mapsto f_{\epsilon,m}(\tilde r)=\tilde r^n+\epsilon \tilde r^{n-2}-2m
\]
is strictly positive for  $\tilde r>\tilde r_{\epsilon,m}$, where $\tilde r_{\epsilon, m}$ is the unique positive zero of $f_{\epsilon,m}$. Thus, the metric $g_{\epsilon,m}$ is well defined on the product $P_{\epsilon,m}=I_{\epsilon,m}\times N$, where  $I_{\epsilon,m}=\{\tilde r;\tilde r>\tilde r_{\epsilon,m}\}$. Moreover, it extends smoothly to the slice $\tilde r=\tilde r_{\epsilon,m}$, the so-called {\em horizon}, denoted $\Hc_{\epsilon,m}$. This terminology can be justified as follows. The metric $g_{\epsilon,m}$ is {\em static} in the sense that
$\rho_{\epsilon,m}\in\Nc_{g_{\epsilon,m}}$.
It is well-known that this is equivalent to the assertion that  the Lorentzian metric
\[
\tilde g_{\epsilon,m}=-\rho_{m,\epsilon}^2d\tau^2+g_{\epsilon,m},
\]
defined on $Q_{\epsilon,m}=\mathbb R\times P_{\epsilon,m}$,
is a solution to the vacuum  field equations with negative cosmological constant:
\[
{\rm Ric}_{\tilde g_{\epsilon,m}}=-n\tilde g_{\epsilon,m}.
\]
Moreover, the null hypersurface $\tilde r=\tilde r_{\epsilon,m}$
defines the event horizon surrounding the central singularity $\tilde r=0$. This justifies the horizon terminology and explains why  $g_{\epsilon,m}$ is termed a black hole metric.

A computation shows  that
if $(\theta_1,\cdots,\theta_{n-1})$ are coordinates in $N_{\tilde r}$ then the sectional curvatures of $g_{\epsilon,m}$ are
\[
K(\partial_{\tilde r},\partial_{{\theta}_i})=-1-(n-2)\frac{m}{\tilde r^n}
\]
and
\[
K(\partial_{{\theta}_i},\partial_{{\theta}_j})=-1+\frac{2m}{\tilde r^{n-2}}.
\]
This not only shows that $g_{\epsilon,m}$ satisfies the appropriate dominant energy condition, since
its scalar curvature is
$
R_{g_{\epsilon,m}} =  -n(n-1),
$
but also suggests that $g_{\epsilon,m}$ is ALH in the sense described above. In fact, a straightforward computation  gives
\[
\|g_{\epsilon,m}-g_{\epsilon}\|_{g_\epsilon}+ \|dg_{\epsilon,m}\|_{g_\epsilon}=O\left(\tilde r^{-n}\right),
\]
as expected.
Using (\ref{mass}) with $\varphi=\rho_\epsilon$ we finally conclude that $\mathfrak m_{(P_{\epsilon,m},g_{\epsilon,m})}=m$, which shows that $m$ should be interpreted as the total mass of $g_{\epsilon,m}$.

One immediately finds that
the area  $|\Hc_{\epsilon,m}|$ of the horizon $\Hc_{\epsilon,m}$ of \linebreak $(P_{\epsilon,m},g_{\epsilon,m})$ relates to its mass  $m$ by means of
\[
m=\frac{1}{2}\left(\left(\frac{|\Hc_{\epsilon,m}|}{\vartheta_{n-1}}\right)^{\frac{n}{n-1}}
+\epsilon \left(\frac{|\Hc_{\epsilon,m}|}{\vartheta_{n-1}}\right)^{\frac{n-2}{n-1}}\right).
\]
Thus, in analogy with the standard Penrose inequality (\ref{pineq}), it is natural to conjecture  that if $(M,g)$ is an $n$-dimensional ALH manifold (with respect to the reference metric $g_\epsilon$) carrying an outermost minimal  horizon $\Gamma$ and satisfying $R_g\geq -n(n-1)$ everywhere then there holds
\begin{equation}\label{propcon}
	\mathfrak m_{(M,g)}\geq\frac{1}{2}\left(\left(\frac{|\Gamma|}{\vartheta_{n-1}}\right)^{\frac{n}{n-1}}
	+\epsilon \left(\frac{|\Gamma|}{\vartheta_{n-1}}\right)^{\frac{n-2}{n-1}}\right),
\end{equation}
with the equality occurring if and only if $g$ is isometric to the corresponding black hole metric.

\begin{remark}\label{physical}
	{\rm For $\epsilon\leq 0$ and in the physical dimension $n=3$, (\ref{propcon}) first appears in \cite{CS} as a conjectured inequality whose veracity would follow in case the use of the so-called Geroch's monotonicity of the Hawking mass under the inverse mean curvature flow, as envisaged by Gibbons \cite{G}, could be justified.
		Contrary to this rather optimistic initial expectation, Neves \cite{N} has shown that, at least in the AH case, the convergence properties of the flow at infinity are insufficient to implement Geroch's scheme. Similar remarks should also apply in the general ALH context, even though Lee and Neves \cite{LN} have recently established that Geroch's strategy works in the so-called \lq non-positive mass range\rq; see Remark \ref{negmass}.
		Despite these negative results, Theorem \ref{theomain} below confirms that our methods apply to handle the special case of graphs in {\em any} dimension $n\geq 3$.}
\end{remark}

To motivate our setting we observe that each $(P_{\epsilon,m},g_{\epsilon,m})$ can be isometrically immersed as a radially symmetric vertical graph inside $(Q_{\epsilon},\overline g_\epsilon)$: the defining function $u_{\epsilon,m}:I_{\epsilon,m}\to \mathbb R$ satisfies $u_{\epsilon,m}(\tilde r_{\epsilon,m})=0$ and
\begin{equation}\label{embed}
	\rho_{\epsilon}(\tilde r)^2\left(\frac{du_{\epsilon,m}}{d\tilde r^2}\right)^2
	=\frac{1}{\rho_{\epsilon,m}(\tilde r)^2}-\frac{1}{\rho_{\epsilon}(\tilde r)^2}
\end{equation}
It is clear from this that the horizon $\Hc_{\epsilon,m}$ lies on the totally geodesic horizontal slice $P_\epsilon^{0}$, with the intersection $M\cap P_\epsilon^{0}$ being orthogonal along $\Hc_{\epsilon,m}$. This motivates us to consider a more general class of hypersurfaces in $(Q_{\epsilon},\overline g_\epsilon)$.

\begin{definition}\label{alshyp}
	We say that a complete hypersurface $(M,g)\subset (Q_{\epsilon},\overline g_\epsilon)$, possibly carrying a compact inner boundary $\Gamma$, is {\em asymptotically locally hyperbolic} (ALH) if there exists a compact set $K\subset M$ so that $M-K$ can be written as a graph over the end $E_0$ of the horizontal slice
	$P_{\epsilon}^{0}\subset Q_{\epsilon}$, with the graph being  associated to a smooth function $u$ such
	that the asymptotic condition (\ref{alhcond}) holds for the nonparametric chart $\Psi_u(x,u(x))=x$, $x\in E_0$.
	Moreover, we assume that $\rho_\epsilon \mathfrak R_{g}$ is integrable. As usual, if $\epsilon=1$ and $N=\mathbb S^{n-1}$ then we say that $(M,g)$ is {\em asymptotically hyperbolic} (AH). 
\end{definition}

Under these conditions, the mass $\mathfrak m_{(M,g)}$ can be computed by taking $\Psi=\Psi_u$ in (\ref{mass}); as in the Introduction we assume that $\Psi_u$ is balanced if $\epsilon=1$. More precisely, if  the inner boundary $\Gamma$ lies on some totally geodesic, horizontal slice $P_{\epsilon}^{\tau_0}$, which we of course identify with $P_\epsilon$, and moreover that the intersection  $M\cap P^{\tau_0}_\epsilon$ is orthogonal along $\Gamma$, so that $\Gamma\subset M$ is minimal and hence a horizon indeed, then the computations in \cite{dLG2} actually give the following integral formula for the mass:
\begin{equation}\label{massform}
	\mathfrak m_{(M,g)}=c_n\int_M\Theta\left(R_g+n(n-1)\right) dM+
	c_n\int_\Gamma \rho_{\epsilon} Hd\Gamma,
\end{equation}
where $\Theta=\langle \partial/\partial t,N\rangle$, $N$ is the unit normal to $M$, which we choose so as to point upward at infinity, and $H$ is the mean curvature of $\Gamma\subset P_{\epsilon}$ with respect to its {\em inward} pointing unit normal, which means that the normal points in the direction {\em  opposite} to the end of $P_{\epsilon}$ given by $\tilde r=+\infty$. In particular, if $R_g\geq -n(n-1)$ and $M$ is a graph ($\Theta>0$) then
\begin{equation}\label{massineq}
	\mathfrak m_{(M,g)}\geq c_n\int_\Gamma \rho_{\epsilon} Hd\Gamma.
\end{equation}

We are now in a position to state the following Alexandrov-Fenchel-type inequality, which extends Theorem \ref{main} to the case $\epsilon\leq 0$.

\begin{theorem}\label{aftype}
	Let $\Sigma\subset P_\epsilon=P$ be a compact embedded hypersurface which is {\em star-shaped} in the sense that it can be written as a radial graph over a slice $N_{\tilde r}=\{\tilde r\}\times N$ and {\em strictly mean convex} in the sense that its mean curvature satisfies $H>0$. Then there holds
	\begin{equation}\label{aftype2}
		c_n\int_\Sigma \rho_{\epsilon} Hd\Sigma\geq \frac{1}{2}\left(
		\left(\frac{|\Sigma|}
		{\vartheta_{n-1}}\right)^{\frac{n}{n-1}}
		+\epsilon \left(\frac{|\Sigma|}{\vartheta_{n-1}}\right)^{\frac{n-2}{n-1}}\right),
	\end{equation}
	with the equality occurring if and only if $\Sigma$ is a slice.
\end{theorem}

By making $\Sigma=\Gamma$ and combining (\ref{massineq}) and (\ref{aftype2}) we immediately obtain the following sharp Penrose-type inequality, which extends Theorem \ref{main2} to the case $\epsilon\leq 0$.

\begin{theorem}\label{theomain}
	If $M\subset Q_{0,\epsilon}$ is an ALH graph as above, so that its horizon $\Gamma\subset P^{\tau_0}_\epsilon$ is
	star-shaped and  mean convex in the sense that $H\geq 0$, then
	\begin{equation}\label{penrose}
		\mathfrak m_{(M,g)}\geq \frac{1}{2}\left(
		\left(\frac{|\Gamma|}{\vartheta_{n-1}}\right)^{\frac{n}{n-1}}
		+\epsilon\left(\frac{|\Gamma|}{\vartheta_{n-1}}\right)^{\frac{n-2}{n-1}}\right),
	\end{equation}
	with the equality holding if and only if $(M,g)$ is (congruent to) the graph realization (\ref{embed}) of the corresponding black hole solution.
\end{theorem}

\begin{remark}\label{effec}
	{\rm Under the conditions of Theorem \ref{theomain}, the mass $\mathfrak m_{(M,g)}$ is always positive due to (\ref{massineq}). In particular, if $\epsilon=-1$ the lower bounds (\ref{aftype2}) and (\ref{penrose}) only are  effective if we further assume that  $|\Gamma|>\vartheta_{n-1}$.}
\end{remark}

As already observed, the following corollary provides a positive answer to   a question posed by Gibbons \cite{G} and Chru\'sciel-Simon \cite{CS} for the class of initial data sets we consider.

\begin{corollary}\label{cormain}
	If the horizon $\Gamma$ is a surface of genus $\gamma\geq 1$
	then
	\begin{equation}\label{haw}
		\mathfrak m_{(M,g)}\geq \left(\frac{4\pi}{\vartheta_2}\right)^{{3}/{2}}
		\sqrt{\frac{|\Gamma|}{16\pi}}\left(1-\gamma+\frac{|\Gamma|}{4\pi}\right),
	\end{equation}
	where for $\gamma=1$ we assume the normalization $\vartheta_2=4\pi$.
	Moreover, the equality holds if and only if $(M,g)$ is (congruent to) the graph realization of the corresponding black hole solution.
\end{corollary}

\begin{proof}
	If $\gamma\geq 2$ this follows by taking $n=3$ and $\epsilon=-1$ in the theorem and  observing that Gauss-Bonnet gives $\vartheta_2=4\pi (\gamma-1)$. If $\gamma=1$ we take $\epsilon=0$ and use the normalization.
\end{proof}

The proofs of Theorems \ref{aftype} and \ref{theomain} are straightforward adaptations of the proofs of Theorems \ref{main} and \ref{main2}. The necessary modifications are briefly described in Appendix \ref{alhcaseap}. We note that further results in this direction have been obtained in \cite{GWWX}.

\begin{remark}\label{negmass}
	{\em As discussed in \cite{CS}, when $\epsilon=-1$ the Kottler metrics (\ref{bhmet}) also describe static black hole solutions when the parameter $m$ becomes negative up to a certain critical value, namely,
		\[
		m_{\rm crit}=-\frac{(n-2)^{\frac{n-2}{2}}}{n^{\frac{n}{2}}}.
		\]
		In this regard we mention that Lee and Neves \cite{LN} used the Huisken- \linebreak Ilmanen's formulation of the inverse mean curvature flow to establish a Penrose-type inequality for conformally compact ALH $3$-manifolds in this mass range. More precisely, they prove (\ref{haw}) with the mass replaced by the supremum of the mass aspect function, which is assumed to be non-positive along the boundary at infinity. In particular, their manifolds always have {\em non-positive} mass while our graphs necessarily satisfy $\mathfrak m_{(M,g)}>0$; see Remark \ref{effec}. Thus, their result and Corollary \ref{cormain} are in a sense complementary to each other.}
\end{remark}

\section{Geometric flows for hypersurfaces}\label{imcfsec}

As mentioned above,
the proof of Theorem \ref{main} uses the inverse mean curvature flow recently studied by Gerhardt \cite{G2} \cite{G3}; see also \cite{D}. As a preparation for the argument, let us start by considering  a closed, isometrically immersed hypersurface $\Sigma\subset\mathbb H^{n}$ with unit normal $\xi$.
As usual, we denote by $g_1$ both the standard metric on $\mathbb H^n$ and its restriction to $\Sigma$. Also, $b$ is the second fundamental form of $\Sigma$. For simplicity we set $D=\nabla_{g_1}$. Thus, if $X$ and $Y$ are vector fields tangent to $\Sigma$,
$$
b(X,Y)=g_1(aX,Y)=\langle aX,Y\rangle,
$$
where
$$
aX=-D_X\xi,
$$
is the shape operator.
As before, we denote by $\kappa=(\kappa_1,\cdots,\kappa_{n-1})$ the principal curvature vector of $\Sigma$, so that
\begin{equation}\label{meancurv}
	H=\sigma_1(\kappa)={\rm tr}_{g_1}b
\end{equation}
is the {\em mean curvature}.
It follows from the Cauchy-Schwarz inequality that
\begin{equation}\label{cauchy}
	(n-1)|a|^2\geq H^2,
\end{equation}
with the equality occurring at a given point if and only if $\Sigma$ is umbilical there.
We also consider the {\em extrinsic scalar curvature} of the immersion, namely,
\begin{equation}\label{extscalar}
	K=\sigma_2(\kappa)=\sum_{i<j}\kappa_i\kappa_j=\frac{1}{2}\left(H^2-|a|^2\right).
\end{equation}
Notice that these invariants are related by the {Newton-MacLaurin inequality}:
\begin{equation}\label{lp}
	2K\leq \frac{n-2}{n-1}H^2,
\end{equation}
with the equality holding at a given point only if $\Sigma$ is umbilical there \cite{HLP}.
We also recall the {\em support function}
\begin{equation}\label{suppdef}
	p=\langle D\rho,\xi\rangle,
\end{equation}
where we set $\rho=\rho_1$ for simplicity. This  relates to $\rho$ and $H$ by means of the following Minkowski identity:
\begin{equation}\label{laprho}
	\Delta\rho=(n-1)\rho+H p,
\end{equation}
where $\Delta={\rm div}\circ \nabla$ is the Laplacian of $g_1|_\Sigma$. This is a consequence of the fact that the vector field $D\rho$ is {\em conformal}, that is,
\begin{equation}\label{conf}
	D_XD\rho=\rho X,
\end{equation}
for any vector field $X$ on  $\mathbb H^n$. Another useful consequence of (\ref{conf}) is the formula
\begin{equation}\label{mink22}
	{\rm div}\left(G\nabla\rho\right)=(n-2)\rho H+2pK,
\end{equation}
where
\begin{equation}\label{newttt}
	G=HI-a
\end{equation}
is the {\em Newton tensor} of $a$;
see \cite{AdLM} for further details.

We now consider an one-parameter family $X(t,\cdot):\Sigma^{n-1}\to \mathbb H^{n}$, $t\in [0,\epsilon)$, of closed, isometrically immersed hypersurfaces evolving  according to
\begin{equation}\label{evol}
	\frac{\partial X}{\partial t}=F\xi,
\end{equation}
where $\xi$ is the unit normal to $\Sigma_t=X(t,\cdot)$ and $F$ is a general speed function. To save notation we also denote the evolving hypersurface simply by $\Sigma$ whenever no confusion arises.
The following evolution equations are well-known \cite{Z}.

\begin{proposition}\label{evolequations}
	Under the flow (\ref{evol}) we have:
	\begin{enumerate}
		\item The unit normal evolves as
		\begin{equation}\label{unit}
			\frac{\partial \xi}{\partial t}=-\nabla F;
		\end{equation}
		\item
		The area element $d\Sigma$ evolves as
		\begin{equation}\label{evolvelemvol}
			\frac{\partial}{\partial t}d\Sigma=-FHd\Sigma.
		\end{equation}
		In particular, if $A$ is the area of $\Sigma$ then
		\begin{equation}\label{evolvarea}
			\frac{dA}{dt}=-\int_\Sigma FH d\Sigma;
		\end{equation}
		\item The mean curvature evolves as
		\begin{equation}\label{evolvmean}
			\frac{\partial H}{\partial t}=\Delta F+(|a|^2-(n-1))F.
		\end{equation}
	\end{enumerate}
\end{proposition}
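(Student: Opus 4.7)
The plan is to establish each of the three evolution equations by direct tensorial computation along the moving hypersurface, following the well-documented template for extrinsic geometric flows in a general Riemannian ambient \cite{Z}. The only special feature of $\mathbb H^n$ used here is that its ambient curvature is constant and equal to $-1$, so in particular $\overline{\mathrm{Ric}}=-(n-1)g_0$. Throughout I would fix a local parameter frame $\{e_i\}$ on $\Sigma^{n-1}$ commuting with $\partial_t$, so that derivatives may be freely interchanged.

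For (\ref{unit}), unit length of $\xi$ forces $\partial_t\xi\perp\xi$, so $\partial_t\xi$ is tangent to $\Sigma$. Differentiating the orthogonality relation $\langle\xi,X_*e_i\rangle=0$ in $t$, together with $\partial_tX=F\xi$ and $\langle\xi,D_{e_i}\xi\rangle=0$, yields $\langle\partial_t\xi,X_*e_i\rangle=-e_i(F)$, identifying $\partial_t\xi=-\nabla F$. For (\ref{evolvelemvol}) and (\ref{evolvarea}), the starting point is the identity $\partial_t g_{ij}=-2Fb_{ij}$, an immediate consequence of the Weingarten formula $D_{e_i}\xi=-a(X_*e_i)$ and the definition of $b$. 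The standard variation of the volume element then gives
$$
\partial_t(d\Sigma)=\tfrac12 g^{ij}(\partial_t g_{ij})\,d\Sigma=-FH\,d\Sigma,
$$
and integrating on $\Sigma$ produces (\ref{evolvarea}).

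The main obstacle is (\ref{evolvmean}). The standard route is first to derive the evolution of the shape operator. Writing $b_{ij}=\langle\xi,D_{e_i}(X_*e_j)\rangle$, differentiating in $t$, and using (\ref{unit}) to handle $\partial_t\xi$, one is left with a contribution requiring the interchange of $\partial_t$ with the ambient covariant derivative. After invoking the Gauss and Codazzi identities and incorporating the ambient Riemann tensor of $\mathbb H^n$, one obtains an evolution equation for $a$ in which the ambient curvature enters explicitly. Tracing and accounting for $\partial_t g^{ij}=2Fb^{ij}$ yields
$$
\partial_tH=\Delta F+F|a|^2+F\,\overline{\mathrm{Ric}}(\xi,\xi),
$$
and substituting $\overline{\mathrm{Ric}}(\xi,\xi)=-(n-1)$ produces (\ref{evolvmean}) exactly. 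The delicate point, and the main bookkeeping challenge, is maintaining consistent signs throughout the commutation identities given the convention $aX=-D_X\xi$ fixed earlier in the paper; once this is settled, the only ambient-dependent ingredient is the single contraction of the Riemann tensor along $\xi$, which for $\mathbb H^n$ is fully controlled by the constant sectional curvature.
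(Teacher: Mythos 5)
Your derivation is correct and is exactly the standard computation; the paper does not reproduce a proof at all but simply states that these evolution equations are well-known and cites \cite{Z}, whose argument is the one you sketch. The sign bookkeeping you flag as the main delicacy in part (3) does work out with the paper's conventions $aX=-D_X\xi$, $b(X,Y)=g(aX,Y)$, and the only ambient input is $\overline{\mathrm{Ric}}(\xi,\xi)=-(n-1)$, as you say.
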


If $\Sigma$ is star-shaped and mean convex then our conventions imply that $\xi$ is the {\em inward} pointing unit normal vector.
Thus, in the model (\ref{modelr}), $\Sigma$ can be graphically represented by means of a  map of the type
\begin{equation}\label{graph}
	\theta\in\mathbb S^{n-1}\mapsto (u(\theta),\theta)\in \mathbb H^n,
\end{equation}
for some smooth function $u$. In particular, if $\theta=(\theta_1,\cdots,\theta_{n-1})$ is a local coordinate system on $\mathbb S^{n-1}$ and $E_i=\partial/\partial \theta_i$ then the tangent space to the graph is spanned by
\begin{equation}\label{work1}
	Z_i=u_i\frac{\partial}{\partial r}+E_i,\quad u_i=E_i(u),\quad i=1,\cdots,n-1,
\end{equation}
so we can take
\begin{equation}\label{work2}
	\xi=\frac{1}{W}\left(\frac{u^i}{\dot\rho(u)^2}E_i-\frac{\partial}{\partial r}\right), \quad W=\sqrt{1+|\nabla v|_h^2},
\end{equation}
where
\begin{equation}\label{extrashi}
	v=\varphi(u), \quad \dot\varphi=1/\dot\rho,
\end{equation}
with
\[
\dot\rho(u)=\sinh u;
\]
see \cite{G2} or \cite{D}.
Also,
\begin{equation}\label{supportform}
	p=-\frac{\sinh u}{W}.
\end{equation}
Notice that $p\leq 0$.

From now on we assume that $\Sigma=\Sigma_t=X(t,\cdot)$ is a one-parameter family of star-shaped, strictly mean convex hypersurfaces evolving according to (\ref{evol}). This assumption will be justified later on for the flows  we shall consider; see Proposition  \ref{coll} and Remark \ref{brendlef}.

\begin{proposition}\label{evolvsupport}
	Under the above conditions, the function $\rho$ evolves along the flow (\ref{evol}) according to
	\begin{equation}\label{evolvrho}
		\frac{\partial \rho}{\partial t}=p F.
	\end{equation}
\end{proposition}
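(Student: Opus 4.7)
The statement is essentially an application of the chain rule, so my plan would be very short. At a fixed material point $q\in\Sigma^{n-1}$, the quantity $\rho(t,q):=\rho(X(t,q))$ is just the composition of $\rho:\mathbb H^n\to\mathbb R$ with the evolving immersion. Differentiating in $t$ and using (\ref{evol}) gives
\[
\frac{\partial\rho}{\partial t}=\langle D\rho,\partial_tX\rangle=\langle D\rho,F\xi\rangle=F\langle D\rho,\xi\rangle,
\]
and by the definition (\ref{suppdef}) of the support function the last inner product is exactly $p$. Hence $\partial_t\rho=pF$, as claimed.

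The only thing worth flagging is that the assumption that $\Sigma_t$ is star-shaped and strictly mean convex (so that $\xi$ is the inward pointing unit normal as specified after Proposition~\ref{evolequations}) is not needed for this particular identity; it enters only through the sign conventions that make (\ref{supportform}) give $p\leq 0$, and hence make $\partial_t\rho$ have the sign expected for the flows of interest. Consequently the computation above is valid for any smooth one-parameter family satisfying (\ref{evol}), irrespective of a speed choice or a convexity hypothesis.

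There is no real obstacle here: the identity is a one-line chain-rule statement, and the apparent content is simply the recognition that the tangential variation of $X$ within $\Sigma_t$ contributes nothing, since $\rho$ is viewed as a function on $\mathbb H^n$ and only the normal component $F\xi$ of $\partial_tX$ survives when paired with $D\rho$. In the final write-up I would simply present the two-line derivation above and cite (\ref{suppdef}).
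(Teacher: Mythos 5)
Your proof is correct, and it is actually more streamlined than the paper's. The paper derives (\ref{evolvrho}) by passing to the radial graph parametrization: it first records that the radial function satisfies $\partial_t u=-F/W$, then uses $\rho=\cosh u$ to get $\partial_t\rho=\sinh u\,\partial_t u=-\sinh u\,F/W$, and finally identifies this with $pF$ via the coordinate expression (\ref{supportform}) $p=-\sinh u/W$. You instead apply the chain rule directly to $\rho\circ X$ and invoke only the definition (\ref{suppdef}), which is shorter, avoids the graph machinery entirely, and makes clear that the identity has nothing to do with star-shapedness or mean convexity; that observation is correct and worth keeping. The one place your commentary slightly overreaches is the remark that ``the tangential variation of $X$ within $\Sigma_t$ contributes nothing'': under the Lagrangian flow (\ref{evol}) the velocity $\partial_t X=F\xi$ has no tangential component at all, so there is nothing tangential to discard. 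Tangential components \emph{would} contribute to $\langle D\rho,\partial_t X\rangle$ if one used a non-normal parametrization such as the graph one (since $D\rho$ is the ambient gradient and is not normal to $\Sigma$), and then $\partial_t\rho$ would genuinely differ from $pF$ pointwise. So the correct way to phrase the point is simply that (\ref{evol}) already presents the velocity as purely normal, and the computation is a one-line chain rule.
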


\begin{proof}
	As noted above, we can graphically represent $\Sigma$ by (\ref{graph}), where $u$ is time dependent, so that
	(\ref{evol}) implies
	$$
	\frac{\partial u}{\partial t}=-\frac{F}{W}.
	$$
	Since $u=r$ along $\Sigma$ we have
	\begin{eqnarray*}
		\frac{\partial\rho}{\partial t}=\sinh u\frac{\partial u}{\partial t}=-\frac{\sinh u\, F}{W},
	\end{eqnarray*}
	and the result follows from (\ref{supportform}).
\end{proof}

The following proposition computes the variation of the curvature integral
\begin{equation}\label{alpha}
	\Ic(\Sigma)=\int_\Sigma\rho Hd\Sigma
\end{equation}
on the left-hand side of (\ref{afh}).

\begin{proposition}\label{evolvfunc}
	Along the flow (\ref{evol}) we have
	\begin{equation}\label{evolrhoH}
		\frac{d\Ic}{dt}=2\int_\Sigma pHF d\Sigma-2\int_\Sigma\rho K Fd\Sigma.
	\end{equation}
\end{proposition}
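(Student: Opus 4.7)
The plan is a direct differentiation of $\Ic(\Sigma)=\int_\Sigma \rho H\, d\Sigma$ under the time derivative, followed by a single integration by parts and some algebraic substitutions. First I would write
\begin{equation*}
\frac{d\Ic}{dt}=\int_\Sigma\left(\frac{\partial\rho}{\partial t}\,H+\rho\,\frac{\partial H}{\partial t}\right)d\Sigma+\int_\Sigma\rho H\,\frac{\partial}{\partial t}(d\Sigma),
\end{equation*}
and then plug in the three evolution equations already at our disposal: Proposition \ref{evolvsupport} gives $\partial_t\rho = pF$, Proposition \ref{evolequations}(3) gives $\partial_tH=\Delta F+(|a|^2-(n-1))F$, and Proposition \ref{evolequations}(2) gives $\partial_t(d\Sigma)=-FH\,d\Sigma$. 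This yields
\begin{equation*}
\frac{d\Ic}{dt}=\int_\Sigma pHF\,d\Sigma+\int_\Sigma\rho\,\Delta F\,d\Sigma+\int_\Sigma\rho(|a|^2-(n-1))F\,d\Sigma-\int_\Sigma\rho H^2 F\,d\Sigma.
\end{equation*}

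Next I would move the Laplacian off of $F$ by integrating by parts twice on the closed hypersurface $\Sigma$, producing $\int_\Sigma F\,\Delta\rho\,d\Sigma$, and then apply the Minkowski identity (\ref{laprho}), namely $\Delta\rho=(n-1)\rho+Hp$, to convert this into
\begin{equation*}
\int_\Sigma\rho\,\Delta F\,d\Sigma=(n-1)\int_\Sigma\rho F\,d\Sigma+\int_\Sigma pHF\,d\Sigma.
\end{equation*}
Substituting back, the two $(n-1)\int\rho F\,d\Sigma$ terms cancel, leaving
\begin{equation*}
\frac{d\Ic}{dt}=2\int_\Sigma pHF\,d\Sigma+\int_\Sigma\rho(|a|^2-H^2)F\,d\Sigma.
\end{equation*}

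Finally, the definition (\ref{extscalar}) of the extrinsic scalar curvature gives the algebraic identity $|a|^2-H^2=-2K$, and inserting this turns the second integral into $-2\int_\Sigma\rho K F\,d\Sigma$, which is exactly (\ref{evolrhoH}). There is no serious obstacle here: the only non-trivial input is the Minkowski identity (\ref{laprho}), which has already been recorded, and the integration by parts is legitimate because $\Sigma$ is closed. The step most prone to a sign slip is keeping track of the inward-pointing convention for $\xi$ (so that $p\le 0$) while applying $\partial_t\rho=pF$; this is the only place where one must be careful, but it has already been built into Proposition \ref{evolvsupport}.
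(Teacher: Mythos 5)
Your proof is correct and follows essentially the same route as the paper's: differentiate $\Ic$ using the three evolution equations, integrate by parts to move the Laplacian onto $\rho$, invoke the Minkowski identity (\ref{laprho}), and close with the algebraic identity $|a|^2-H^2=-2K$ from (\ref{extscalar}). The only difference is that you spell out the intermediate cancellation of the $(n-1)\int\rho F$ terms, which the paper compresses into ``after some cancelations.''
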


\begin{proof}
	Using Propositions \ref{evolequations} and  \ref{evolvsupport},
	\begin{eqnarray*}
		\frac{d\Ic}{dt}& = & \int_\Sigma\frac{\partial \rho}{\partial t} Hd\Sigma+\int_\Sigma\rho \frac{\partial H}{\partial t}d\Sigma+\int_\Sigma\rho H\frac{\partial}{\partial t}d\Sigma\\
		& = & \int_\Sigma pHFd\Sigma
		+\int_\Sigma\rho\left(\Delta F+\left(|a|^2-(n-1)
		\right)F\right)d\Sigma -\\
		& & \qquad-\int_\Sigma\rho H^2Fd\Sigma\\
		& = & \int_MpHF d\Sigma+\int_\Sigma F\Delta\rho  d\Sigma+\int_\Sigma\rho\left(|a|^2-(n-1)\right)Fd\Sigma-\\
		& & \quad -\int_\Sigma\rho H^2f d\Sigma,
	\end{eqnarray*}
	and the result follows,  after some cancelations, from
	(\ref{extscalar}) and (\ref{laprho}).
\end{proof}

\begin{proposition}\label{suppevolv}
	Along the flow (\ref{evol}), the support function evolves according to
	\begin{equation}\label{suppevol}
		\frac{\partial p}{\partial t}=F\rho-\langle \nabla\rho,\nabla F\rangle.
	\end{equation}
	As a consequence,
	\begin{equation}\label{suppintevol}
		\frac{d}{dt}\int_\Sigma pd\Sigma=n\int_\Sigma F\rho d\Sigma.
	\end{equation}
\end{proposition}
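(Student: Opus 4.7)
The plan is to compute the time derivative of $p=\langle D\rho,\xi\rangle$ directly by differentiating each factor along the flow $\partial X/\partial t = F\xi$. For the $\xi$ factor I invoke $\partial\xi/\partial t = -\nabla F$ from Proposition \ref{evolequations}. For the $D\rho$ factor I view $D\rho$ as an ambient vector field on $\mathbb{H}^n$ whose restriction to $\Sigma_t$ moves with the hypersurface, so its time derivative is the ambient covariant derivative along $F\xi$; the conformal identity $D_XD\rho = \rho X$ then yields $D_{F\xi}D\rho = F\rho\xi$. Pairing with $\xi$ gives $F\rho$, while pairing $D\rho$ with $-\nabla F$ gives $-\langle\nabla\rho,\nabla F\rangle$, since $\nabla F$ is tangent to $\Sigma$ so only the tangential component $\nabla\rho$ of $D\rho$ survives. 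Together these produce the pointwise identity $\partial p/\partial t = F\rho - \langle\nabla\rho,\nabla F\rangle$.

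For the integrated statement I differentiate $\int_\Sigma p\,d\Sigma$ and combine the pointwise formula just obtained with $(\partial/\partial t)d\Sigma = -FH\,d\Sigma$ from Proposition \ref{evolequations}. This gives
\[
\frac{d}{dt}\int_\Sigma p\,d\Sigma = \int_\Sigma F\rho\,d\Sigma - \int_\Sigma\langle\nabla\rho,\nabla F\rangle\,d\Sigma - \int_\Sigma pFH\,d\Sigma.
\]
Since $\Sigma$ is closed, integration by parts converts the middle term into $\int_\Sigma F\Delta\rho\,d\Sigma$, and the Minkowski identity $\Delta\rho = (n-1)\rho + Hp$ rewrites this as $(n-1)\int_\Sigma F\rho\,d\Sigma + \int_\Sigma FHp\,d\Sigma$. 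The latter cancels exactly the remaining $-\int_\Sigma pFH\,d\Sigma$, leaving $n\int_\Sigma F\rho\,d\Sigma$ as claimed.

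I do not anticipate any real obstacle: both identities reduce to a short direct computation. The only subtlety worth flagging is distinguishing the ambient covariant differentiation of the vector field $D\rho$ on $\mathbb{H}^n$ (which governs $\partial p/\partial t$ and where the conformal law applies) from the intrinsic gradient of $\rho|_{\Sigma_t}$ appearing on the right-hand side; once that bookkeeping is in place, the conformal property of $D\rho$ and the already-recorded Minkowski identity do all the work.
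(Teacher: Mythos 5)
Your computation is correct and follows the same route as the paper: differentiate $p=\langle D\rho,\xi\rangle$ via the product rule, use the conformal identity $D_XD\rho=\rho X$ together with $\partial\xi/\partial t=-\nabla F$ to get the pointwise law, then integrate, combine with the area-element evolution, integrate by parts, and invoke the Minkowski identity $\Delta\rho=(n-1)\rho+Hp$. No gaps; the bookkeeping between ambient $D\rho$ and tangential $\nabla\rho$ that you flag is exactly the point the paper's one-line calculation relies on.
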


\begin{proof}
	Using (\ref{suppdef}), (\ref{conf}) and (\ref{unit}) we compute:
	\begin{eqnarray*}
		\frac{\partial p}{\partial t} & = &  \frac{\partial}{\partial t}\langle D\rho,\xi\rangle\\
		& = & F\langle D_{\xi}D\rho,\xi\rangle + \langle D\rho,D_{\partial/\partial t}\xi\rangle\\
		& = & F\rho -\langle \nabla \rho,\nabla F\rangle,
	\end{eqnarray*}
	which proves (\ref{suppevol}). Now, using this and (\ref{evolvelemvol}),
	\begin{eqnarray*}
		\frac{d}{dt}\int_\Sigma pd\Sigma & = & \int_\Sigma \frac{\partial p}{\partial t}d\Sigma +
		\int_\Sigma p\frac{\partial}{\partial t}d\Sigma \\
		& = & \int_\Sigma F\rho d\Sigma -\int_\Sigma \langle \nabla \rho,\nabla F\rangle d\Sigma -
		\int_\Sigma pFH d\Sigma \\
		& = &  \int_\Sigma F\rho d\Sigma +\int_\Sigma F \Delta \rho d\Sigma -\int_\Sigma pFH d\Sigma,
	\end{eqnarray*}
	so that (\ref{suppintevol}) follows from (\ref{laprho}).
\end{proof}

The following proposition, proved in \cite{B}, plays a central role in our argument.

\begin{proposition}\label{brendle}
	If  $\Sigma\subset\mathbb H^n$ is  star-shaped and strictly mean convex then
	\begin{equation}\label{bre}
		(n-1)\int_\Sigma \frac{\rho}{H}d\Sigma\geq -\int_\Sigma pd\Sigma.
	\end{equation}
	Moreover, the equality holds if and only if $\Sigma$ is totally umbilical.
\end{proposition}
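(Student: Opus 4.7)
I would follow a Reilly-type strategy in the spirit of Ros, adapted to the conformal Killing potential $\rho$ on $\mathbb{H}^n$; this is essentially Brendle's argument \cite{B} in the warped-product setting.

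The first step is to convert the right-hand side of \eqref{bre} to a volume integral. Tracing the conformal Killing equation \eqref{conf} gives $\Delta_{g_0}\rho = n\rho$ on $\mathbb{H}^n$. Since $\xi$ is the inward unit normal to $\Sigma$, the outward unit normal to the enclosed domain $\Omega\subset\mathbb{H}^n$ is $\nu=-\xi$, and the divergence theorem yields
$$-\int_\Sigma p\,d\Sigma \;=\; \int_\Sigma\langle D\rho,\nu\rangle\,d\Sigma \;=\; n\int_\Omega \rho\,dV.$$
Hence \eqref{bre} is equivalent to the weighted Heintze--Karcher-type inequality
$$(n-1)\int_\Sigma\frac{\rho}{H}\,d\Sigma \;\geq\; n\int_\Omega\rho\,dV.$$

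Next I would introduce an auxiliary function by solving the Dirichlet problem $\Delta_{g_0}u=\rho$ on $\Omega$ with $u=0$ on $\Sigma$. Standard elliptic theory furnishes a smooth solution, with $\partial_\nu u>0$ along $\Sigma$ by the Hopf boundary point lemma. Integration by parts gives $\int_\Sigma\partial_\nu u\,d\Sigma=\int_\Omega\rho\,dV$, and the Cauchy--Schwarz inequality applied with the splitting $\partial_\nu u = \sqrt{\rho/H}\cdot\sqrt{H/\rho}\,\partial_\nu u$ produces
$$\left(\int_\Omega\rho\,dV\right)^{\!2} \;\leq\; \left(\int_\Sigma\frac{\rho}{H}\,d\Sigma\right)\left(\int_\Sigma\frac{H}{\rho}(\partial_\nu u)^2\,d\Sigma\right).$$
The proof therefore reduces to the weighted Reilly-type estimate
$$\int_\Sigma\frac{H}{\rho}(\partial_\nu u)^2\,d\Sigma \;\leq\; \frac{n}{n-1}\int_\Omega\rho\,dV.$$

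Establishing this final inequality is the main obstacle. The plan is to start from a $\rho$-weighted Bochner identity for $u$ on $\Omega$, integrate by parts, and invoke the pointwise bound $|D^2u|^2\geq(\Delta u)^2/n=\rho^2/n$ together with the Ricci bound $\mathrm{Ric}_{g_0}=-(n-1)g_0$. The cross-terms involving $\rho$ and its derivatives that appear when commuting $\rho$ through the differential operators would be absorbed using \eqref{conf}; this is precisely where the warped-product structure enters decisively and where Brendle's computation lives. The boundary contribution produces exactly $\int_\Sigma (H/\rho)(\partial_\nu u)^2\,d\Sigma$ with the correct sign, closing the argument. For the rigidity statement, equality in Cauchy--Schwarz forces $\partial_\nu u$ to be a constant multiple of $\rho/H$ on $\Sigma$, while equality in the pointwise trace bound forces $D^2u=(\Delta u/n)g_0=(\rho/n)g_0$ on $\Omega$. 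A direct computation expressing the second fundamental form of $\Sigma$ in terms of $D^2u|_{T\Sigma}$ and $\partial_\nu u$, combined with the boundary condition $u=0$, then shows that the shape operator of $\Sigma$ is a scalar multiple of the identity, so $\Sigma$ is totally umbilic.
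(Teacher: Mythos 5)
Your proposal takes a genuinely different route from the paper. The paper does \emph{not} run an elliptic argument: it proves Proposition \ref{brendle} by flowing $\Sigma$ under $\partial X/\partial t=\rho\xi$ (that is, taking $F=\rho$ in (\ref{evol})), deriving the pointwise estimate $\partial_t(\rho/H)\leq -\rho^2/(n-1)$ from (\ref{evolvmean}), (\ref{evolvrho}), (\ref{laprho}) and (\ref{cauchy}), and combining with (\ref{suppintevol}) to show that $(n-1)\int_\Sigma\rho/H\,d\Sigma+\int_\Sigma p\,d\Sigma$ is non-increasing along this flow. The flow is recognized as the flow by inward unit-speed parallels in the conformal metric $\tilde g_0=\rho^{-2}g_0$, hence becomes extinct in finite time with the monotone quantity tending to $0$, which gives (\ref{bre}). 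Equality forces equality in (\ref{cauchy}) pointwise, i.e.\ umbilicity. Your strategy (Ros/Reilly via an auxiliary Dirichlet problem, Cauchy--Schwarz, and a weighted Reilly formula) is a legitimate alternative and is essentially the approach of Qiu--Xia and Li--Xia, but it is not what the paper does.

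That said, there is a concrete mismatch in your setup that would prevent the plan, as written, from closing. The miraculous cancellation in the warped-product Reilly formula occurs for the weight $V=\rho$: using $\Delta_{g_0}\rho=n\rho$, $D^2\rho=\rho g_0$, $\mathrm{Ric}_{g_0}=-(n-1)g_0$, one gets
$(\Delta\rho)\,g_0-D^2\rho+\rho\,\mathrm{Ric}_{g_0}=(n\rho-\rho-(n-1)\rho)\,g_0=0$,
so the interior gradient term vanishes and (for $f|_\Sigma=0$) the boundary term is $\int_\Sigma\rho H\,(\partial_\nu f)^2\,d\Sigma$. But your Cauchy--Schwarz split $\partial_\nu u=\sqrt{\rho/H}\cdot\sqrt{H/\rho}\,\partial_\nu u$ produces the quantity $\int_\Sigma\frac{H}{\rho}(\partial_\nu u)^2\,d\Sigma$, which would require a $1/\rho$-weighted Reilly formula; for $V=1/\rho$ the combination $(\Delta V)g_0-D^2V+V\mathrm{Ric}_{g_0}$ does \emph{not} vanish, so the interior terms do not cancel and the desired estimate does not follow. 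The consistent choice is to solve $\Delta f=1$ in $\Omega$, $f|_\Sigma=0$ (not $\Delta u=\rho$), note $f\leq 0$ by the maximum principle, so that $\int_\Sigma\rho\,\partial_\nu f\,d\Sigma=\int_\Omega\rho\,dV-n\int_\Omega\rho f\,dV\geq\int_\Omega\rho\,dV$, split $\rho\,\partial_\nu f=\sqrt{\rho/H}\cdot\sqrt{\rho H}\,\partial_\nu f$, and then use the $\rho$-weighted Reilly formula together with $|D^2f|^2\geq(\Delta f)^2/n$ to get $\int_\Sigma\rho H(\partial_\nu f)^2\,d\Sigma\leq\frac{n-1}{n}\int_\Omega\rho\,dV$; this closes the argument. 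As written, your plan omits the sign observation for $f$ and places the weight on the wrong side, so the key step (which you already flag as ``the main obstacle'') would not go through without reworking.
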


\begin{proof}
	This is a rather special case of the Heintze-Karcher-type inequality proved in \cite{B}, so we merely sketch the elegant argument there.
	The idea is to let $\Sigma$ flow under
	\begin{equation}\label{brendflow}
		\frac{\partial X}{\partial t}=\rho\xi,
	\end{equation}
	so we take  $F=\rho$ in (\ref{evol}). Using (\ref{evolvmean}), (\ref{evolvrho}), (\ref{laprho}) and (\ref{cauchy}) we see that, as long as the flow exists,
	\begin{eqnarray*}
		\frac{\partial}{\partial t}\frac{\rho}{H} & = & \frac{1}{H}\frac{\partial \rho}{\partial t}-
		\frac{\rho}{H^2}\frac{\partial H}{\partial t}\\
		& = & \frac{p\rho}{H}-\frac{\rho}{H^2}\left(\Delta \rho+\left(|a|^2-(n-1)\right)\rho\right)\\
		& = & -\frac{\rho^2}{H^2}|a|^2\\
		& \leq & -\frac{\rho^2}{n-1},
	\end{eqnarray*}
	so that by (\ref{evolvelemvol}),
	$$
	\frac{d}{dt}\int_\Sigma \frac{\rho}{H}d\Sigma\leq -\frac{n}{n-1}\int_\Sigma\rho^2d\Sigma.
	$$
	Combining this with (\ref{suppintevol}) we finally get
	$$
	\frac{d}{dt}\left((n-1)\int_\Sigma\frac{\rho}{H}d\Sigma+\int_\Sigma pd\Sigma\right)\leq 0,
	$$
	that is, the quantity within parenthesis is monotone non-increasing along the flow (\ref{brendflow}). The next step is to investigate the asymptotic behavior of solutions of (\ref{brendflow}). This might appear problematic at first sight but the key observation is that (\ref{brendflow}) is equivalent to the standard flow by {\em inward} parallel hypersurfaces ($F=1$) in the conformal metric
	$$
	\tilde g_1=\rho^{-2}g_1=\frac{dr^2}{\cosh^2 r}+\tanh^2r h.
	$$
	Thus, any solution becomes extinct in a certain finite time $t_*>0$ so that
	$$
	\lim_{t\to t_*}(n-1)\int_\Sigma\frac{\rho}{H}d\Sigma+\int_\Sigma pd\Sigma=0,
	$$
	as desired.
	In fact, an additional complication arises from the fact that the flow might develop singularities before the extinction time due to the appearance of cut points but, as explained in \cite{B}, a regularization procedure can be implemented to take care of this.
\end{proof}

\begin{remark}\label{brendlef}
	{\rm It follows from the computation above that
		$$
		\frac{\partial}{\partial t}\frac{H}{\rho}\geq \frac{H^2}{n-1},
		$$
		which implies that strict mean convexity is preserved under (\ref{brendflow}).}
\end{remark}

From now on we specialize to the flow
\begin{equation}\label{imcfreal}
	\frac{\partial X}{\partial t}=-\frac{\xi}{H},
\end{equation}
so that $F=-1/H$. This is the famous {\em inverse mean curvature flow},
which has been extensively studied in a variety of  contexts \cite{G1} \cite{U} \cite{HI} \cite{N}.
Here we will make use of  recent results by Gerhardt \cite{G2} \cite{G3} for evolving hypersurfaces in hyperbolic space, which we collect below.

\begin{proposition}\label{coll}
	If the initial hypersurface is star-shaped and strictly mean convex then the corresponding solution  is defined for all $t>0$ and expands the evolving hypersurfaces toward infinity while maintaining star-shapedness and strictly mean convexity.  Moreover, the hypersurfaces become strictly convex exponentially fast and also more and more umbilical in the sense that
	\begin{equation}\label{asymb}
		|b_i^j-\delta_i^j|\leq Ce^{-\frac{t}{n-1}}, \quad t>0,
	\end{equation}
	that is, the principal curvatures are uniformly bounded and converge exponentially fast to $1$.
	Moreover, there exists 
	$f:\mathbb S^{n-1}\to \mathbb R$ smooth so that, as $t\to +\infty$, the graphing function $u$ satisfies
	\begin{equation}\label{st5}
		\lim_{t\to +\infty}\left\|u-\frac{t}{n-1}-f(\theta)\right\|_{C^\infty(\mathbb S^{n-1})}=0.  
	\end{equation}
	In particular, 
	\begin{equation}\label{cruc2}
		\rho(u)=\cosh u=O(e^{\frac{t}{n-1}}), \quad \dot\rho(u)=\sinh u=O(e^{\frac{t}{n-1}}),
	\end{equation}
	and
	\begin{equation}\label{cruc3}
		|\nabla u|_h+|\nabla^2u|_h=O(1).
	\end{equation}
\end{proposition}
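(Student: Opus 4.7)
The plan is to reduce (\ref{imcfreal}) to a scalar parabolic PDE for the graphing function $u(t,\theta)$ on $\mathbb{S}^{n-1}$, derive the a priori estimates that give long-time existence, and finally analyze the asymptotics of the rescaled function $w = u - t/(n-1)$.

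First I would use (\ref{graph}) with $u$ depending on $t$ together with the computation behind Proposition \ref{evolvsupport} (now with $F=-1/H$) to write
\[
\frac{\partial u}{\partial t} = \frac{1}{WH},
\]
a quasilinear equation which is uniformly parabolic on intervals where $H$ is bounded above and bounded below away from zero. Short-time existence on a maximal interval $[0,T^*)$, together with preservation of star-shapedness and strict mean convexity on that interval, then follows from standard parabolic theory.

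Next I would establish the a priori estimates. For the $C^0$ bound and the assertion that the hypersurfaces escape every compact set, I would compare with the explicit solutions provided by concentric geodesic spheres: a sphere of radius $r(t)$ evolves by IMCF with $\dot r = \tanh r/(n-1)$, forcing $r(t) = t/(n-1) + c + o(1)$, and the maximum principle sandwiches $u$ between two such profiles. The gradient estimate is most naturally phrased in terms of $v = \varphi(u)$ as in (\ref{extrashi}), since $W^2 = 1 + |\nabla v|_h^2$; applying the maximum principle to $|\nabla v|_h^2$ after differentiating the evolution equation gives a uniform bound. The $C^2$ estimate, i.e.\ the control of the Weingarten operator $b_i^j$, is the delicate step: I would evolve $b_i^j$ directly and combine upper and lower bounds on $H$ coming from (\ref{evolvmean}) with a maximum principle argument applied to a well-chosen auxiliary quantity measuring how far $b_i^j$ is from $\delta_i^j$. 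Higher regularity follows by Krylov--Safonov plus Schauder bootstrap, and continuation gives $T^*=+\infty$.

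Finally, for the sharp asymptotics (\ref{st5}) I would set $w = u - t/(n-1)$ and show that $w$ solves a parabolic equation whose linearization at $t=+\infty$ is a uniformly elliptic operator on $\mathbb{S}^{n-1}$ with exponentially decaying source; this forces $w(t,\cdot)\to f$ in $C^\infty$ and transfers to (\ref{asymb}) through the geometric identities relating $b_i^j$ to the two-jet of $u$. The bounds (\ref{cruc2}) and (\ref{cruc3}) are then immediate consequences. The main obstacle I expect is the pinching estimate (\ref{asymb}) itself: schematically, $\partial_t b_i^j$ has a good diffusive term and zero-order contributions whose signs are delicate, with the negative sectional curvature of the ambient contributing a damping proportional to $\delta_i^j - b_i^j$ that should drive convergence to the identity; but making this rigorous at the sharp exponential rate $e^{-t/(n-1)}$ requires a careful choice of test quantity — something of the form $(b_i^j-\delta_i^j)(b_j^i-\delta_j^i)$ weighted by an appropriate power of $H-(n-1)$, or compared directly to an explicit spherical barrier — so as to close the maximum principle argument without losing the sharp rate.
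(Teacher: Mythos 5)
The paper does not actually prove Proposition \ref{coll}: it is presented as a summary of results taken from Gerhardt's papers \cite{G2} and \cite{G3} (the only introductory sentence is ``we will make use of recent results by Gerhardt \dots which we collect below''), and Remark \ref{claimnot} even flags that the sharp asymptotics (\ref{st5}) is the corrected form from \cite{G3} after an error in \cite{G2}. So there is no in-paper argument to compare against; the appropriate ``proof'' here is simply the citation.

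That said, your sketch is a faithful roadmap of Gerhardt's actual strategy, and the individual ingredients you list are the right ones: rewriting the flow as the scalar parabolic equation $\partial_t u = 1/(WH)$, barrier comparison with concentric geodesic spheres (for which $\dot r = \tanh r/(n-1)$) to pin down the $C^0$ growth rate $t/(n-1)$, a gradient estimate phrased in terms of $v=\varphi(u)$, and then the curvature (``$C^2$'') estimate, which is indeed where the real work lies. A few places where your outline is a little glib: preservation of strict mean convexity is not a soft consequence of parabolic existence theory but requires a separate maximum-principle argument giving a positive lower bound on $H$ (and an upper bound, for uniform parabolicity); and the delicate step you identify --- the sharp pinching $|b^j_i - \delta^j_i| \le C e^{-t/(n-1)}$ --- is genuinely the crux, requiring an evolution inequality for a carefully chosen quantity and is the part of \cite{G2,G3} that occupies most of the analysis. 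Finally, as Remark \ref{claimnot} notes, the limit profile $f$ is in general \emph{not} constant, so one cannot expect the rescaled metrics to round out; your phrasing (convergence of $w$ to $f$) correctly leaves room for this, but it is worth being explicit that one should not try to prove convergence of the normalized metric to the round one. In short: your outline is consistent with the proof the paper defers to, but it is (correctly, on your part) only an outline, since carrying out the pinching estimate is itself a substantial piece of work.
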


\begin{remark}
	\label{claimnot}
	{\em It is claimed in \cite{G2} that the function $f$ above is actually a constant, which means that the flow would deform the induced metric on the hypersurface to a round one after a suitable scaling. This is, however, not correct, as the concrete example in \cite{HW} shows. The correct asymptotics (\ref{st5})  appears in \cite{G3}.}   
\end{remark}

\section{The proofs of Theorems \ref{main} and \ref{main2}}\label{proofmain}

The proof of Theorem \ref{main} involves the consideration of two new monotone quantities along the solution of (\ref{imcfreal}) with $\Sigma$ as the initial hypersurface. Thus, for any closed $\Sigma\subset \mathbb H^n$ we set
\begin{equation}\label{renintsupp}
	\Jc(\Sigma)=-\int_\Sigma pd\Sigma,
\end{equation}
\begin{equation}\label{renarea}
	\Kc(\Sigma)=\omega_{n-1} \Ac(\Sigma)^{\frac{n}{n-1}},
\end{equation}
where $\Ac(\Sigma)=A/\omega_{n-1}$, and
\begin{equation}\label{funct}
	\Lc(\Sigma)=\frac{\Ic(\Sigma)-(n-1)\Kc(\Sigma)}{\Ac(\Sigma)^{\frac{n-2}{n-1}}}.
\end{equation}
To save notation, sometimes we write $\Ic(t)=\Ic(\Sigma_t)$, etc. As we shall see below, the new monotone quantities are $\Lc$ and $\Ac^{-\frac{n}{n-1}}(\Jc-\Kc)$.

\begin{proposition}\label{geode}
	On a geodesic sphere we have
	\begin{equation}\label{ic}
		\Lc\geq (n-1)\omega_{n-1}.
	\end{equation}
	Moreover, the equality holds if and only if the geodesic sphere is centered at the origin.
\end{proposition}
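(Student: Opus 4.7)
\medskip\noindent\textbf{Proof plan.}
The plan is a direct computation: I will evaluate $\Lc(\Sigma)$ on an arbitrary geodesic sphere $\Sigma$ of radius $R$ centered at some point $q\in\mathbb H^n$ and read the inequality off as a monotone function of $\rho(q)=\cosh d(o,q)$, where $o$ is the chosen origin. The point is that every ingredient entering $\Lc$ is either a standard invariant of a geodesic sphere or is controlled by a clean mean-value identity.

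The easy inputs are that $\Sigma$ is totally umbilical with principal curvatures $\coth R$, so $H=(n-1)\coth R$ is constant, $A=\omega_{n-1}\sinh^{n-1}R$, $\Ac^{(n-2)/(n-1)}=\sinh^{n-2}R$, and $\Kc(\Sigma)=\omega_{n-1}\sinh^n R$. The one nontrivial step is the identity
\[
\int_\Sigma \rho\,d\Sigma \;=\; \rho(q)\cosh R\cdot A,
\]
which I will establish in the Minkowski model of $\mathbb H^n$: parameterise $\Sigma$ as $x(v)=q\cosh R+v\sinh R$ with $v$ ranging over the unit sphere of the Euclidean space $T_q\mathbb H^n$, observe that $\rho(x(v))$ is an affine function of $v$ whose constant term is $\rho(q)\cosh R$, and note that the linear part integrates to zero on a centered Euclidean sphere. (Equivalently, this is the classical mean-value identity for eigenfunctions of the hyperbolic Laplacian with eigenvalue $n$, of which $\rho$ is one by virtue of (\ref{conf}).)

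Multiplying by the constant $H$ then gives $\Ic(\Sigma)=(n-1)\rho(q)\cosh^2R\cdot\omega_{n-1}\sinh^{n-2}R$. Substituting into the definition of $\Lc$ and using $\cosh^2R-\sinh^2R=1$, a short rearrangement yields
\[
\Lc(\Sigma)\;=\;(n-1)\omega_{n-1}\bigl[1+(\rho(q)-1)\cosh^2R\bigr].
\]
Since $\rho(q)\ge 1$ with equality iff $q=o$, both the inequality (\ref{ic}) and the rigidity statement follow immediately. There is no substantial obstacle here: the mean-value identity above is the only point of care, and in the Minkowski picture it collapses to the trivial fact that a linear function averages to zero on a centered Euclidean sphere.
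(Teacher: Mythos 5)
Your proof is correct and produces the clean closed-form
\[
\Lc(\Sigma)=(n-1)\omega_{n-1}\bigl[1+(\rho(q)-1)\cosh^2R\bigr],
\]
from which both the inequality (\ref{ic}) and the equality case are transparent. The route is genuinely different from the paper's. After checking equality for centered spheres, the paper applies the Minkowski-type identity (\ref{mink22}) on the totally umbilical sphere (with $K=\tfrac{(n-1)(n-2)}{2}\coth^2r$) to get $\int_\Sigma\rho H\,d\Sigma=-(n-1)\coth^2r\int_\Sigma p\,d\Sigma$, and then uses the divergence theorem to rewrite $\int_\Sigma p\,d\Sigma=-n\int_B\rho\,d\mathbb H^n$, reducing the claim to comparing the volume integral $\int_B\rho$ for an off-centered ball against the centered one of the same radius --- a step the paper dismisses as easy. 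You instead exploit the constancy of $H$ on a geodesic sphere to write $\Ic=H\int_\Sigma\rho\,d\Sigma$ and evaluate $\int_\Sigma\rho\,d\Sigma=\rho(q)\cosh R\cdot A$ directly by the spherical mean-value identity for the eigenfunction $\rho$, which you prove in the hyperboloid model by noting that the linear part of $v\mapsto\rho(q\cosh R+v\sinh R)$ averages to zero over the Euclidean unit sphere of $T_q\mathbb H^n$. Your argument buys an explicit formula valid on every geodesic sphere and avoids (\ref{mink22}) altogether, at the cost of working in the ambient model. The one step you gloss over is the conversion $d\Sigma=\sinh^{n-1}R\,dv$ relating the hyperbolic and Euclidean area elements, but since $A=\omega_{n-1}\sinh^{n-1}R$ this is immediate, and your computations are otherwise airtight.
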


\begin{proof}
	If a geodesic sphere has  radius $r$ then its  area is $A=\omega_{n-1}\sinh^{n-1}r$ and its mean curvature is $H=(n-1)\coth r$. Furthermore, if it is centered at the origin then its support function is $p=-\sinh r$ by (\ref{supportform}). The equality in (\ref{ic}) then follows by a direct computation.
	On the other hand, if $\Sigma\subset\mathbb H^n$ is {\em any}  geodesic sphere of radius $r$ then
	$$
	K=\frac{(n-1)(n-2)}{2}\coth^2 r,
	$$
	so that (\ref{mink22}) yields
	\begin{eqnarray*}
		\int_\Sigma \rho Hd\Sigma & = & -\frac{2}{n-2}\int_\Sigma pKd\Sigma\\
		& = & -(n-1)\coth^2r\int_\Sigma pd\Sigma.
	\end{eqnarray*}
	Furthermore, if $B$ is the geodesic ball bounded by $\Sigma$,  (\ref{suppdef}), (\ref{conf}) and the divergence theorem imply
	\[
	\int_\Sigma pd\Sigma = -\int_B\Delta_{\mathbb H^n}\rho d\mathbb H^n = -n\int_B\rho d\mathbb H^n, \]
	so that
	\[
	\Ic(\Sigma)=\int_\Sigma \rho Hd\Sigma=n(n-1)\coth^2r\int_B \rho d\mathbb H^n.
	\] 
	It is clear from (\ref{funct}) and this way of writing $\Ic(\Sigma)$ as a volume integral involving $\rho$
	that the strict inequality in (\ref{ic}) holds 
	if $\Sigma$ is not centered at the origin.
\end{proof}

\begin{remark}\label{strict}
	{\rm Inequality (\ref{ic}) above just means that the inequality in Theorem \ref{main} holds for any geodesic sphere, with the equality occurring if and only if it is centered at the origin.
	}
\end{remark}

\begin{proposition}\label{inter}
	If the initial hypersurface $\Sigma$ in (\ref{imcfreal})  is star-shaped and strictly mean convex then
	\begin{equation}\label{evolva}
		\frac{d\Ac}{dt}=\Ac
	\end{equation}
	and
	\begin{equation}\label{evolvk}
		\frac{d\Kc}{dt}=\frac{n}{n-1}\Kc.
	\end{equation}
	Also,
	\begin{equation}\label{evolvj}
		\frac{d\Jc}{dt}\geq \frac{n}{n-1}\Jc,
	\end{equation}
	with the equality occurring if and only if $\Sigma$ is totally umbilical.
\end{proposition}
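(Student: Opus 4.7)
The plan is to handle the three claims by straightforward substitution of $F=-1/H$ into the evolution formulas already established in Proposition \ref{evolequations} and Proposition \ref{suppevolv}, with the inequality in the third item coming directly from the Heintze--Karcher bound in Proposition \ref{brendle}.

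For (\ref{evolva}), I would substitute $F=-1/H$ into (\ref{evolvarea}) to get $dA/dt=-\int_\Sigma FHd\Sigma=\int_\Sigma d\Sigma=A$, and divide by $\omega_{n-1}$.  Part (\ref{evolvk}) is then a one-line consequence of (\ref{evolva}): differentiating $\Kc=\omega_{n-1}\Ac^{n/(n-1)}$ yields $d\Kc/dt=\frac{n}{n-1}\omega_{n-1}\Ac^{1/(n-1)}\Ac=\frac{n}{n-1}\Kc$.

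For (\ref{evolvj}), the key is to combine (\ref{suppintevol}) with Proposition \ref{brendle}. Substituting $F=-1/H$ into (\ref{suppintevol}) gives
\[
\frac{d}{dt}\int_\Sigma p\,d\Sigma=-n\int_\Sigma\frac{\rho}{H}d\Sigma,
\]
so that $d\Jc/dt=n\int_\Sigma\rho H^{-1}d\Sigma$. By Proposition \ref{coll} the evolving hypersurface remains star-shaped and strictly mean convex, so Brendle's inequality (\ref{bre}) applies at each time and yields
\[
(n-1)\int_\Sigma\frac{\rho}{H}d\Sigma\geq -\int_\Sigma p\,d\Sigma=\Jc.
\]
Multiplying by $n/(n-1)$ gives $d\Jc/dt\geq\frac{n}{n-1}\Jc$.

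Since the only inequality used is (\ref{bre}), the rigidity statement transfers verbatim: equality in (\ref{evolvj}) at time $t$ forces equality in Brendle's inequality for $\Sigma_t$, which by Proposition \ref{brendle} is equivalent to $\Sigma_t$ being totally umbilical. I do not anticipate any genuine obstacle here; the only mildly delicate point is confirming that the hypotheses of Proposition \ref{brendle} continue to hold along the flow, and this is precisely what Proposition \ref{coll} (together with Remark \ref{brendlef} at the initial time) guarantees.
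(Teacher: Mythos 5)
Your proof is correct and follows exactly the paper's own (very terse) argument: (\ref{evolva}) from (\ref{evolvarea}) with $F=-1/H$, (\ref{evolvk}) by differentiating the power of $\Ac$, and (\ref{evolvj}) with its rigidity by combining (\ref{suppintevol}) with Brendle's inequality (\ref{bre}), the applicability of which along the flow is guaranteed by Proposition \ref{coll}. You have simply written out the routine substitutions the paper leaves implicit.
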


\begin{proof}
	The relation (\ref{evolvk}) follows from (\ref{evolva}), which is a consequence of (\ref{evolvarea}) with $F=-1/H$. Also, (\ref{evolvj}) follows immediately from (\ref{suppintevol}) and (\ref{bre}).
\end{proof}

The above result is crucial in establishing the existence of monotone quantities for the flow (\ref{imcfreal}).

\begin{proposition}\label{iii}
	If $\Sigma$ is star-shaped and strictly mean convex then
	\begin{equation}\label{jleqkkk}
		\frac{d}{dt}\frac{\Jc-\Kc}{\Ac^{\frac{n}{n-1}}}\geq 0,
	\end{equation}
	along any solution of (\ref{imcfreal}). Also, in any interval where $\Jc\leq \Kc$ there holds
	\begin{equation}\label{monoL}
		\frac{d\Lc}{dt}\leq 0.
	\end{equation}
	Moreover, if the equality holds in any of these inequalities for some $t$ then $\Sigma_{t}$ is totally umbilical.
\end{proposition}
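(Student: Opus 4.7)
The plan is to differentiate each of the two expressions along the flow, exploit the evolution identities in Proposition \ref{inter} together with the integral identity of Proposition \ref{evolvfunc} applied to $F=-1/H$, and use the Newton-MacLaurin inequality (\ref{lp}) to get one-sided control on $d\Ic/dt$.

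\textbf{Monotonicity of $(\Jc-\Kc)/\Ac^{n/(n-1)}$.} Write $Q=(\Jc-\Kc)\Ac^{-n/(n-1)}$. Using $d\Ac/dt=\Ac$ from (\ref{evolva}), the quotient rule gives
\begin{equation*}
\frac{dQ}{dt}=\frac{1}{\Ac^{n/(n-1)}}\left[\left(\frac{d\Jc}{dt}-\frac{n}{n-1}\Jc\right)-\left(\frac{d\Kc}{dt}-\frac{n}{n-1}\Kc\right)\right].
\end{equation*}
The second bracketed term vanishes identically by (\ref{evolvk}), and the first is $\geq 0$ by (\ref{evolvj}), with equality only when $\Sigma_t$ is totally umbilical. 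This yields (\ref{jleqkkk}) and its equality characterization.

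\textbf{Monotonicity of $\Lc$ when $\Jc\leq\Kc$.} First I would compute $d\Ic/dt$ along the inverse mean curvature flow. Substituting $F=-1/H$ into (\ref{evolrhoH}) gives
\begin{equation*}
\frac{d\Ic}{dt}=2\Jc+2\int_\Sigma \frac{\rho K}{H}\,d\Sigma.
\end{equation*}
By the Newton-MacLaurin inequality (\ref{lp}), $2K/H\leq \frac{n-2}{n-1}H$ pointwise (since $H>0$ by Proposition \ref{coll}), and $\rho>0$, hence
\begin{equation*}
\frac{d\Ic}{dt}\leq 2\Jc+\frac{n-2}{n-1}\Ic,
\end{equation*}
with equality at time $t$ only if $\Sigma_t$ is totally umbilical. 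Combining this with $d\Kc/dt=\frac{n}{n-1}\Kc$ and $d\Ac/dt=\Ac$, a routine differentiation of (\ref{funct}) yields
\begin{equation*}
\frac{d\Lc}{dt}=\frac{1}{\Ac^{(n-2)/(n-1)}}\left[\frac{d\Ic}{dt}-\frac{n-2}{n-1}\Ic-2\Kc\right]\leq \frac{2(\Jc-\Kc)}{\Ac^{(n-2)/(n-1)}}.
\end{equation*}
On any interval with $\Jc\leq \Kc$ the right-hand side is non-positive, proving (\ref{monoL}). Equality in the final chain forces equality in the Newton-MacLaurin step, i.e.\ $\Sigma_t$ is totally umbilical.

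\textbf{Main obstacle.} The computations themselves are essentially routine once the evolution identities of Section \ref{imcfsec} and the Heintze-Karcher-type estimate of Proposition \ref{brendle} (which powers (\ref{evolvj})) are in hand; there is no analytic difficulty here. The conceptual subtlety — which is what actually makes the scheme work — is the algebraic miracle that the $-(n-1)\Kc$ correction and the exponent $(n-2)/(n-1)$ in the denominator of $\Lc$ are precisely the ones that cancel the ``bad'' $\frac{n-2}{n-1}\Ic$ term coming from Newton-MacLaurin, converting $d\Lc/dt$ into $2(\Jc-\Kc)/\Ac^{(n-2)/(n-1)}$. Identifying this combination is the only non-mechanical step; once it is spotted, monotonicity in the region $\Jc\leq\Kc$ falls out immediately, and the rigidity claim in both (\ref{jleqkkk}) and (\ref{monoL}) follows from the equality cases of (\ref{evolvj}) and (\ref{lp}) respectively.
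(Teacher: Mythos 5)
Your proof is correct and follows essentially the same route as the paper: both use Proposition \ref{inter} together with the Newton-MacLaurin bound on $\int \rho K/H$ to control $d\Ic/dt$, then observe that the $-(n-1)\Kc$ and $\Ac^{-(n-2)/(n-1)}$ normalizations make the unfavorable $\frac{n-2}{n-1}\Ic$ term cancel. The only (cosmetic) difference is that you apply the quotient rule to the normalized quantities directly, while the paper derives the differential inequality for $\Ic-(n-1)\Kc$ and invokes (\ref{evolva}) afterward; incidentally your attribution of the equality case of (\ref{monoL}) to Newton-MacLaurin rather than to (\ref{bre}) is the more precise formulation.
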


\begin{proof}
	By (\ref{evolvk}) and (\ref{evolvj}) we get
	$$
	\frac{d}{dt}\left(\Jc-\Kc\right)\geq\frac{n}{n-1}\left(\Jc-\Kc\right),
	$$
	which by (\ref{evolva}) clearly yields (\ref{jleqkkk}).
	Moreover, by (\ref{evolrhoH}) with $F=-1/H$,
	$$
	\frac{d\Ic}{dt}=2\int_{\Sigma}\frac{\rho K}{H}d\Sigma + 2\Jc,
	$$
	so that by (\ref{lp}),
	$$
	\frac{d\Ic}{dt}\leq \frac{n-2}{n-1}\Ic + 2\Jc.
	$$
	From (\ref{evolvk}), after a rearrangement of terms, we get
	$$
	\frac{d}{dt}\left(\Ic-(n-1)\Kc\right)\leq \frac{n-2}{n-1}\left(\Ic-(n-1)\Kc\right)+2
	\left(\Jc-\Kc\right),
	$$
	which reduces to
	$$
	\frac{d}{dt}\left(\Ic-(n-1)\Kc\right)\leq \frac{n-2}{n-1}\left(\Ic-(n-1)\Kc\right),
	$$
	whenever $\Jc\leq \Kc$.
	In the presence of (\ref{evolva}), this immediately gives
	(\ref{monoL}). Finally, if the equality holds in either (\ref{jleqkkk}) or in (\ref{monoL}) then it holds in (\ref{bre}) as well.
\end{proof}

We start the proof of Theorem \ref{main} by noticing that
in
\cite[Theorem 1.1]{BHW} the authors establish a sharp geometric inequality for strictly mean convex, star-shaped hypersurfaces in the anti-deSitter-Schwarzschild space. By sending the mass parameter to zero, it follows from their work that if we set
\begin{equation}\label{bhwhyp2}
	\Mc=\frac{\Ic-(n-1)\Jc}{\Ac^{\frac{n-2}{n-1}}},
\end{equation}
then
there  holds
\begin{equation}\label{bhwhyp}
	\Mc(\Sigma)\geq (n-1)\omega_{n-1},
\end{equation}
for any $\Sigma\subset\mathbb H^n$ strictly mean convex and star-shaped,
with the equality holding if and only if $\Sigma$ is a geodesic sphere centered at the origin. Notice that this implies (\ref{afh}) whenever $\Jc(\Sigma)\geq \Kc(\Sigma)$, so we may assume that $\Jc(\Sigma)<\Kc(\Sigma)$.

We now let $\Sigma$ flow under (\ref{imcfreal}). In case $\Jc(\Sigma_t)>\Kc(\Sigma_t)$ for some $t>0$, let $t_0$ be the first value of the time parameter so that $\Jc(\Sigma_{t_0})=\Kc(\Sigma_{t_0})$.
Notice that $t_0$ exists because by (\ref{jleqkkk}) the quantity $\Ac^{-\frac{n}{n-1}}(\Jc-\Kc)$ is monotone nondecreasing along {\em any} solution of (\ref{imcfreal}).
Since $\Jc(\Sigma_{t})\leq \Kc(\Sigma_{t})$ for $t\leq t_0$, it then follows from Proposition \ref{iii} that
$\Lc(\Sigma)\geq \Lc(\Sigma_{t_0})=\Mc(\Sigma_{t_0})\geq (n-1)\omega_{n-1}$, where we used (\ref{bhwhyp}) in the last step. Thus, our main inequality (\ref{afh}) is also established in this case, so it remains to consider the case in which $\Jc(\Sigma_t)<\Kc(\Sigma_t)$ for any $t>0$. However, if this is the case then it follows again by Proposition \ref{iii} that $\Lc$ is monotone nonincreasing for all $t>0$.
But by Proposition \ref{refin}  we have
$$
\liminf_{t\to +\infty}\Lc(t)\geq(n-1)\omega_{n-1},
$$
so that
$$
\Lc(0)\geq (n-1)\omega_{n-1},
$$
which is just a rewriting of (\ref{afh}). Finally, we note that whenever the equality holds then it also holds in
(\ref{bre}), which implies that $\Sigma$ is a geodesic sphere necessarily centered at the origin by Remark \ref{strict}. This completes the proof of Theorem \ref{main}.

As remarked in the Introduction, the Penrose inequality in Theorem \ref{main2} follows immediately from (\ref{estima}) and the Alexandrov-Fenchel inequality (\ref{afh}) in Theorem \ref{main}. On the other hand, the rigidity statement follows from the arguments in \cite[Section 5]{DGS}. This completes the proof of Theorem \ref{main2}.

\appendix
\section{The asymptotic behavior of $\Lc$}\label{refined}

In this appendix we present a proof of the following proposition, which provides the expected limiting estimate for the quantity $\Lc$ along solutions of  the inverse mean curvature flow.
This asymptotic behavior is used in the proof of Theorem \ref{main}.

\begin{proposition}\label{refin}
	If $\Sigma_t$ is a solution of (\ref{imcfreal}) with $\Sigma_0$ strictly mean convex and star-shaped then
	\begin{equation}\label{ref}
		\liminf_{t\to+ \infty}\Lc(\Sigma_t)\geq (n-1)\omega_{n-1}.
	\end{equation}
\end{proposition}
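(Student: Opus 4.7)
The plan is to combine Gerhardt's asymptotic description of the flow (Proposition \ref{coll}) with the sharp BHW inequality (\ref{bhwhyp}) from \cite{BHW} to reduce Proposition \ref{refin} to a H\"older-type inequality on $\mathbb S^{n-1}$. The starting point is the algebraic identity
\[
\Lc(\Sigma_t)=\Mc(\Sigma_t)+(n-1)\frac{\Jc(\Sigma_t)-\Kc(\Sigma_t)}{\Ac(\Sigma_t)^{(n-2)/(n-1)}},
\]
with $\Mc$ defined by (\ref{bhwhyp2}). Since each $\Sigma_t$ is star-shaped and strictly mean convex by Proposition \ref{coll}, (\ref{bhwhyp}) yields $\Mc(\Sigma_t)\geq(n-1)\omega_{n-1}$ uniformly in $t$, so the problem reduces to proving
\[
\liminf_{t\to+\infty}\frac{\Jc(\Sigma_t)-\Kc(\Sigma_t)}{\Ac(\Sigma_t)^{(n-2)/(n-1)}}\geq 0.
\]

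Next I would write $\Sigma_t$ as a radial graph $(u(t,\theta),\theta)$. From (\ref{st5}) we have $u=t/(n-1)+f(\theta)+o(1)$ in $C^\infty(\mathbb S^{n-1})$ for some smooth $f$, and (\ref{cruc2})--(\ref{cruc3}) give $\sinh u\sim e^u/2$ and $W\to 1$. Using $d\Sigma=W\sinh^{n-1}u\,d\theta$ and $p=-\sinh u/W$, the leading asymptotics are
\[
\Ac\sim\frac{e^t\mu_{n-1}}{2^{n-1}\omega_{n-1}},\quad \Jc\sim\frac{e^{nt/(n-1)}\mu_n}{2^n},\quad \Kc\sim\frac{e^{nt/(n-1)}\mu_{n-1}^{n/(n-1)}}{2^n\omega_{n-1}^{1/(n-1)}},
\]
where $\mu_k:=\int_{\mathbb S^{n-1}}e^{kf}d\theta$. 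H\"older's inequality gives $\mu_{n-1}^{n/(n-1)}\leq\omega_{n-1}^{1/(n-1)}\mu_n$, with equality if and only if $f$ is constant, so $(\Jc-\Kc)/\Ac^{n/(n-1)}$ tends to a limit $\ell\geq 0$. When $\ell>0$ (equivalently, $f$ is nonconstant), the factorization $(\Jc-\Kc)/\Ac^{(n-2)/(n-1)}=\Ac^{2/(n-1)}\cdot(\Jc-\Kc)/\Ac^{n/(n-1)}$ combined with $\Ac^{2/(n-1)}\sim e^{2t/(n-1)}$ forces the liminf to be $+\infty$, and the conclusion is immediate.

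The delicate case is $f\equiv c$ constant, in which $\Sigma_t$ asymptotes to the geodesic sphere of radius $t/(n-1)+c$ (on which $\Lc$ equals $(n-1)\omega_{n-1}$ exactly). Writing $u=t/(n-1)+c+\eta$ with $\eta\to 0$ in $C^\infty$, a second-order Taylor expansion yields
\[
\Jc-\Kc\sim \frac{n}{2}\omega_{n-1}\sinh^n\bigl(t/(n-1)+c\bigr)\mathrm{Var}_{\mathbb S^{n-1}}(\eta)\geq 0
\]
to leading order, so the sign is favorable. The main obstacle is to control the subleading terms precisely enough that $(\Jc-\Kc)/\Ac^{(n-2)/(n-1)}\sim(n/2)\omega_{n-1}\sinh^2\bigl(t/(n-1)+c\bigr)\mathrm{Var}(\eta)$ has a nonnegative liminf; I expect this to reduce to the quantitative decay $\mathrm{Var}(\eta)=O(e^{-2t/(n-1)})$, which should follow from combining the exponential principal-curvature bound (\ref{asymb}) with the graphical evolution equation satisfied by $u$ along the inverse mean curvature flow (\ref{imcfreal}). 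Once this is in hand the liminf is bounded and nonnegative, completing the proof.
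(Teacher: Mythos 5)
Your decomposition $\Lc=\Mc+(n-1)(\Jc-\Kc)/\Ac^{(n-2)/(n-1)}$ is correct, and the reduction to $\liminf(\Jc-\Kc)/\Ac^{(n-2)/(n-1)}\geq 0$ via (\ref{bhwhyp}) is a legitimate strategy, but it is not the paper's route and it leaves a genuine gap. The paper does not use $\Mc$ or $\Jc$ in the appendix at all: it expands $\Ic$ and $\Ac$ directly in the graphical gauge, applies H\"older once to bound $\omega_{n-1}\Ac^{n/(n-1)}$ from above by $\int(\sqrt{\det g})^{n/(n-1)}$, collects all $O(1)$ terms (including the $W$-corrections coming from $W^{\pm 1}=1\pm\tfrac12|\nabla v|_h^2+\cdots$), and reduces the statement to Beckner's sharp Sobolev inequality on $\mathbb S^{n-1}$ applied to $\dot\rho$. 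That sharp inequality is precisely what controls the sign of the residual at the delicate order.

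The gap in your approach is in what you call the delicate case $f\equiv c$, which is in fact the only case that matters for the application in Theorem \ref{main} (there one has $\Jc<\Kc$ for all $t$, forcing $\ell=0$). When $\Jc<\Kc$ for all $t$, the monotone quantity $(\Jc-\Kc)/\Ac^{n/(n-1)}$ is $\leq 0$ throughout, hence $(\Jc-\Kc)/\Ac^{(n-2)/(n-1)}=\Ac^{2/(n-1)}\cdot(\Jc-\Kc)/\Ac^{n/(n-1)}\leq 0$ as well; so your task is to show this quantity tends to $0$, and your asserted expansion $\Jc-\Kc\sim\tfrac n2\omega_{n-1}\sinh^n(\cdot)\,\mathrm{Var}(\eta)\geq 0$ has the wrong sign. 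The expansion omits the $W$-contribution to $\Kc$: since $\Jc=\int\sinh^n u\,d\theta$ exactly (the $W$'s cancel in $p\,d\Sigma$), while $A=\int\sinh^{n-1}u\cdot W\,d\theta$ with $W\geq 1$, the factor $W-1=\tfrac12|\nabla v|_h^2+O(e^{-4t/(n-1)})$ makes $\Kc$ strictly larger, and this correction contributes at exactly the order $e^{(n-2)t/(n-1)}$ that you are trying to control, with the unfavorable sign. In other words, after dividing by $\Ac^{(n-2)/(n-1)}$, the $W$-term and the ``H\"older gap'' you isolate are of the same magnitude, and one needs a genuine functional inequality (Beckner's) to see that their combination is nonnegative. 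No decay rate for $\mathrm{Var}(\eta)$ alone resolves this sign issue, so the step you flag as the ``main obstacle'' cannot be closed by the route you sketch.
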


We write the evolving hypersurfaces as graphs
of a function $u=u(t,\theta)$, $\theta\in\mathbb S^{n-1}$. Recall that $\rho(u)=\cosh u$
so that $\dot\rho(u)=\sinh u$ and
\begin{equation}\label{tata}
	\rho^2=\dot\rho^2+1.
\end{equation}
Also, if $v=\varphi(u)$, $\dot\varphi=1/\dot\rho$, as in (\ref{extrashi}), then
it follows from (\ref{cruc2}) and (\ref{cruc3}) that
\begin{equation}\label{analg2}
	|\nabla v|_h+|\nabla^2v|_h=O(e^{-\frac{t}{n-1}})
\end{equation}
and 
\begin{equation}
	\label{diff}
	|\rho(u)-\dot\rho(u)|=O(e^{-\frac{t}{n-1}}).
\end{equation}
Moreover, by (\ref{work2}),
\begin{equation}\label{fall}
	W^{-1}=1-\frac{1}{2}|\nabla v|^2_h+O(e^{-\frac{4t}{n-1}}).
\end{equation}

The induced metric is
\begin{equation}\label{4}
	g_{1{ij}}=\dot\rho^2(h_{ij}+v_iv_j),
\end{equation}
so that
\begin{equation}\label{spe}
	\sqrt{\det g_1}=\dot\rho^{n-1}\sqrt{\det h}\left(1+\frac{1}{2}|\nabla v|^2_h+O(e^{-\frac{4t}{n-1}})\right),
\end{equation}
so we get the expansions
\begin{equation}\label{eqarea}
	\Ac(\Sigma_t)=\fint \dot\rho^{n-1} +O(e^{\frac{(n-3)t}{n-1}}),
\end{equation}
and
\begin{equation}\label{eq2}
	\Ac(\Sigma_t)^{\frac{n-2}{n-1}}=\left(\fint \dot\rho^{n-1}\right)^{\frac{n-2}{n-1}}+
	O(e^{\frac{(n-4)t}{n-1}}),
\end{equation}
where
\[
\fint=\frac{1}{\omega_{n-1}}\int
\]
and the integration is over $\mathbb S^{n-1}$.

Recall that our intention is to estimate from below  the function
$$
\Lc(\Sigma_t)=\frac{\int_{\Sigma_t}\rho Hd\Sigma_t-(n-1)\omega_{n-1}\left(\Ac(\Sigma_t)\right)^{\frac{n}{n-1}}}
{\Ac(\Sigma_t)^{\frac{n-2}{n-1}}}.
$$
In terms of $v$, the second fundamental form of the evolving hypersurface is
\[
b_{ij}=\frac{\dot\rho}{W}\left(\rho(h_{ij}+v_iv_j)-(\nabla^2v)_{ij}\right).
\]
Notice also that by (\ref{4}) the inverse metric is
\[
g^{ij}_1=\dot\rho^{-2}\left(h^{ij}-\frac{v^iv^j}{W^2}\right),
\]
where $v^i=h^{ij}v_j$, so that the shape operator is
\[
a_j^i=g_1^{ik}b_{kj}=\frac{\rho}{W\dot\rho}\delta^i_j-\frac{1}{W\dot\rho}\tilde h^{ik}(\nabla^2 v)_{kj},
\]
where
\[
\tilde h^{ij}=h^{ij}-\frac{v^iv^j}{W^2},
\]
and from this we see that
\begin{equation}\label{newwtttt}
	\rho H=(n-1)W^{-1}\frac{\rho^2}{\dot\rho }-W^{-1}\frac{\rho}{\dot\rho}\Delta v+O(e^{-\frac{3t}{n-1}}).
\end{equation}
Thus, if we combine this with (\ref{spe}) and (\ref{fall})
we obtain
\begin{eqnarray*}
	\int_{\Sigma_t} \rho Hd\Sigma_t & = & (n-1)\int\rho^2\dot{\rho}^{n-2} -\int\dot{\rho}^{n-1}\Delta v + O(e^{\frac{(n-3)t}{n-1}})\\
	& = & (n-1)\int\rho^2\dot{\rho}^{n-2}+(n-1)\int \dot{\rho}^{n-2}\langle\nabla\dot{\rho},\nabla v\rangle_h
	+O(e^{\frac{(n-3)t}{n-1}})\\
	&= & (n-1)\int\rho^2\dot{\rho}^{n-2}+(n-1)\int \rho^2\dot{\rho}^{n-2}|\nabla v|_h^2 +O(e^{\frac{(n-3)t}{n-1}}).
\end{eqnarray*}

On the other hand, by H\"older inequality and (\ref{spe})
we find that 
\begin{eqnarray*}
	\omega_{n-1} \Ac(\Sigma_t)^{\frac{n}{n-1}} &\leq &\int (\sqrt{{\rm det}\,g_1})^{\frac{n}{n-1}}\\
	& = & \int \dot\rho^n+\frac{n}{2(n-1)}\int \dot{\rho}^n|\nabla v|_h^2 +O(e^{\frac{n-4}{n-1}t}). 
\end{eqnarray*}
Thus, by (\ref{tata}) we obtain 
\begin{eqnarray*}
	\int_{\Sigma_t} \rho Hd\Sigma_t -(n-1)\omega_{n-1}\Ac(\Sigma_t)^{\frac{n}{n-1}}
	& \geq & 
	(n-1)\int\dot{\rho}^{n-2}+\frac{n-2}{2}\int \dot{\rho}^{n}|\nabla v|_h^2\\
	& & \quad + O(e^{\frac{n-3}{n-1}t})\\
	& = & (n-1)\int\dot{\rho}^{n-2} \\
	& & +\frac{n-2}{2}\int \dot{\rho}^{n-4}|\nabla \dot{\rho}|_h^2
	+ O(e^{\frac{n-3}{n-1}t}),
\end{eqnarray*}
so if we take (\ref{eq2}) into account we see that proving  (\ref{ref}) amounts to checking that 
\begin{equation}\label{beckner0}
	(n-1)\int\dot{\rho}^{n-2}+\frac{n-2}{2}\int \dot{\rho}^{n-4}|\nabla \dot{\rho}|_h^2\geq (n-1)\omega_{n-1}^{\frac{1}{n-1}}\left(\int \dot{\rho}^{n-1}\right)^{\frac{n-2}{n-1}}.
\end{equation}
But, as observed in \cite{BHW}, this is an immediate consequence of a sharp Sobolev type inequality by Beckner \cite{Be}. This completes the proof of Proposition \ref{refin}.

\section{The asymptotically locally hyperbolic case}
\label{alhcaseap}

In this appendix we briefly describe how the argument leading to Theorem \ref{main2} can be easily adapted to recover its generalization given by Theorem \ref{theomain}. 

Clearly, the key step is to prove 
the Alexandrov-Fenchel-type inequality (\ref{aftype2}) in Theorem \ref{aftype}. We start by observing that Propositions \ref{evolvsupport}, \ref{evolvfunc} and \ref{suppevolv} remain true with $\rho$ replaced by $\rho_\epsilon$, since their proofs only uses that the ambient manifold is locally hyperbolic and carries the conformal field $\nabla_{g_\epsilon}\rho_\epsilon$; see (\ref{conf}). Also, since this ambient manifold satisfies the structural conditions in the main result in \cite{B}, the analogue of Proposition \ref{brendle} also holds true. Taken together, these facts imply that the analogue of Proposition \ref{iii} still holds true, so that the proof of (\ref{aftype2}) boils down to checking that 
\begin{equation}\label{refeps}
	\liminf_{t\to+\infty}\Lc(\Sigma_t)\geq (n-1)\vartheta_{n-1}\epsilon,
\end{equation}  
where $\Sigma_t$ is the solution to the inverse  mean curvature flow having $\Sigma$ as initial hypersurface;
compare with (\ref{ref}). We note that the left-hand side of (\ref{refeps}) makes sense because the analogue of Proposition \ref{coll} remains true, which follows straightforwardly from the methods in \cite{G2} \cite{G3}. Taking into account that (\ref{tata}) should be replaced by $\rho^2=\dot\rho^2+\epsilon$, we see that (\ref{refeps}) reduces to proving that
\[
(n-1)\epsilon\int_N\dot{\rho}^{n-2}+\frac{n-2}{2}\int_N \dot{\rho}^{n-4}|\nabla \dot{\rho}|_h^2\geq (n-1)\epsilon\omega_{n-1}^{\frac{1}{n-1}}\left(\int_N \dot{\rho}^{n-1}\right)^{\frac{n-2}{n-1}};
\]
compare to (\ref{beckner0}).
Since  the validity of this inequality is immediate for $\epsilon=0,-1$, and the rigidity statement follows from a simple adaptation of the arguments in \cite[Section 5]{DGS}, the proof of Theorem \ref{theomain} is completed.

\end{document}